\author{Petr Kucheriaviy} \title{On numbers not representable as $n + w(n)$}  \date{%
    National Research University Higher School of Economics, Moscow, Russia\\[2ex]%
    \today
}
\newcommand\blfootnote[1]{%
  \begingroup
  \renewcommand\thefootnote{}\footnote{#1}%
  \addtocounter{footnote}{-1}%
  \endgroup
}
\begin{document}

\newtheorem{lemma}{Lemma}
\newtheorem{theorem}{Theorem}
\newtheorem{hypo}{Hypothesis}
\newtheorem{prop}{Proposition}

\newtheorem*{hypo*}{Conjecture}

\maketitle

\blfootnote{The author was supported by the Basic Research Program of the National Research University Higher School of Economics.}

\begin{abstract}
    Let $w(n)$ be an additive non-negative integer-valued arithmetic function which is equal to $1$ on primes. We study the distribution of $n + w(n)$ $\pmod p$ and give a lower bound for the density of the set of numbers which are not representable as $n + w(n)$.
\end{abstract}


\section{Introduction}

Let $w(n)$ be an additive non-negative integer-valued arithmetic function which is equal to $1$ on primes. For example one can set $w(n)$ to be $\omega(n)$ -- the number of distinct prime factors of $n$ or $\Omega(n)$ -- the number of prime factors with multiplicity. By $E$ we denote the set of natural numbers not representable as $n + w(n)$. For the complement of $E$ we use the notation $E^c$. In this article we are concerned with the lower bound for
\[
\Xi(N) = |[1, N] \cap E|.
\]

\begin{hypo*}
\[
\Xi(N) \gg N.
\]
\end{hypo*}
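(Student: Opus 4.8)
\medskip
\noindent\textbf{Proof proposal.}
The plan is to recast the problem as a counting problem for collisions of the map $f(n) = n + w(n)$ and then to produce $\gg N$ of them. For the functions of interest one has $w(n) \ll \log n$, so $W := \max_{n \le N} w(n) = O(\log N)$; the values $f(n)$ with $1 \le n \le N$ then lie in $[1, N + W]$, and $m \le N$ is representable exactly when it is a value of $f$ (if $f(n) = m \le N$ then automatically $n \le m \le N$). Writing $f([1,N])$ for the set of attained values, the number of representable $m \in [1,N]$ is $|f([1,N])| - O(\log N)$, and hence
\[
\Xi(N) = N - |f([1,N])| + O(\log N) = C + O(\log N), \qquad C := N - |f([1,N])|,
\]
where $C$ counts the repeated values of $f$ on $[1,N]$. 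Thus the Conjecture is \emph{equivalent} to $C \gg N$.

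A clean sufficient condition comes from nearest collisions: $f(n) = f(n-1)$ holds precisely when $w(n) = w(n-1) - 1$, and each such $n$ duplicates a value already attained, so
\[
C \ge \#\{\, n \le N : w(n) = w(n-1) - 1 \,\}.
\]
It would therefore suffice to show this count is $\gg N$. Heuristically this is compelling: for $w = \omega$ or $w = \Omega$ the Erd\H{o}s--Kac law makes $w(n)$ and $w(n-1)$ fluctuate on the scale $\sqrt{\log\log n}$ about the common mean $\log\log n$, so the difference should equal $1$ for a positive proportion of $n$. The same philosophy, applied to the first-moment count $\sum_{k \ge 0} \Pr[w(m-k) = k]$, shows (via Landau's theorem, since $\sum_{k \ge 1}(\log\log m)^{k-1}/(k-1)! = \log m$) that the expected number of representations of a random $m$ tends to a positive constant; a Poisson model then predicts $\Xi(N) \sim cN$ with $c > 0$.

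It is instructive to see why the distribution modulo a fixed prime $p$ — the theme of this paper — cannot by itself reach $\gg N$, since this locates the obstruction. With $N_a = \#\{n \le N : f(n) \equiv a \pmod p\}$, comparing each residue class to the $\sim N/p$ integers it contains gives the unconditional bound $\Xi(N) \ge \tfrac12 \sum_{a=0}^{p-1} |N_a - N/p|$. But the Erd\H{o}s--Kac distribution of $w$ has variance tending to infinity, so $w(n) \bmod p$ equidistributes even inside a fixed progression $n \equiv r \pmod p$; consequently $f(n) = n + w(n) \bmod p$ equidistributes as well, the right-hand discrepancy is $o(N)$ for every fixed $p$, and this route yields only $\Xi(N) = o(N)$. (For $p = 2$ this is the estimate $\sum_{n \le N} (-1)^{n + w(n)} = o(N)$.)

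The main obstacle is thus the control of correlations of $w$ at shifted arguments. Writing $R(m) = \#\{n : f(n) = m\}$, one has $C = \sum_m (R(m) - 1)^{+}$, and the number of collision pairs is
\[
\sum_{j \ge 1} \#\{\, n \le N : w(n) - w(n+j) = j \,\};
\]
together with the multiplicity bound $\max_m R(m) = O(\log N)$, a lower bound for this sum forces $C \gg N$. Even its simplest term, the positive-proportion count of $n$ with $w(n) = w(n+1) + 1$, concerns the joint law of an additive function at consecutive integers, in the circle of the Erd\H{o}s--Mirsky phenomenon, where positive-proportion statements lie beyond current technology. Supplying this correlation input across a window of length $\sim \log\log N$ — thereby realising the Poisson heuristic above — is the crux, and is the reason the statement is recorded here as a conjecture rather than a theorem.
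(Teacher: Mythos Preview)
Your write-up is not a proof, and you say so yourself in the last sentence. That is the correct assessment: the statement you were asked to prove is the paper's \emph{Conjecture}, and the paper does not prove it either. What the paper actually establishes is the weaker Theorem~\ref{main}, $\Xi(N)\gg N/\log\log N$; the positive-density statement $\Xi(N)\gg N$ is left open.

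Your heuristic discussion is sound and lines up well with the paper's own commentary. The equivalence you set up between $\Xi(N)\gg N$ and a positive density of collisions of $n\mapsto n+w(n)$ is essentially the content of Proposition~\ref{G} (one direction, with the trivial multiplicity bound giving the other). Your explanation of why the residue-class method cannot reach $\gg N$ for any \emph{fixed} $p$ is exactly the point the paper makes after Lemma~\ref{idea}: one has $a(r)\sim N/p$, so the discrepancy $\sum_r |a(r)-N/p|$ is $o(N)$; the paper therefore lets $p$ grow with $N$ and extracts the secondary term of $a(r)$, which is what buys the $1/\log\log N$. Your identification of the genuine obstruction --- correlations of $w$ at shifted arguments, e.g.\ a positive proportion of $n$ with $w(n)=w(n-1)-1$ --- is the honest reason the full conjecture remains out of reach.

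In short: there is nothing to compare against, because the paper contains no proof of this statement; your submission is a correct diagnosis rather than a proof, and should be labeled as such.
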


We show that

\begin{theorem}
\label{main}
\[
\Xi(N) \gg \frac{N}{\log \log N}.
\]
\end{theorem}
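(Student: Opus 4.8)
The plan is to detect non-representability through the distribution of $f(n) := n + w(n)$ in residue classes modulo a slowly growing prime $p = p(N)$. Write $\nu_r = \tfrac1N\#\{n\le N : f(n)\equiv r \pmod p\}$ and set $\mathrm{e}_p(x) = e^{2\pi i x/p}$. The elementary engine is a pigeonhole inequality: any residue class $r$ that receives fewer than $N/p$ representations must contain a non-representable integer, and summing these deficits gives
\[
\Xi(N) \ge N\sum_{r:\,\nu_r < 1/p}\Big(\tfrac1p - \nu_r\Big) = \tfrac{N}{2}\sum_{r=0}^{p-1}\Big|\nu_r - \tfrac1p\Big| + o(N),
\]
where the $o(N)$ absorbs the $n\le N$ with $f(n)>N$ (finitely controllable since $w\ge 0$). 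Thus it suffices to show the distribution of $f(n)\bmod p$ is $\ell^1$-far from uniform, and by Parseval this reduces to a single Fourier coefficient, since $\sum_r|\nu_r - 1/p| \ge \big(\tfrac1p\sum_{t\ne0}|\hat\nu(t)|^2\big)^{1/2}\ge p^{-1/2}|\hat\nu(1)|$, where $\hat\nu(t) = \frac1N\sum_{n\le N}\mathrm{e}_p\big(t\,f(n)\big)$.

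The heart of the matter is therefore to bound below
\[
\hat\nu(1) = \frac1N\sum_{n\le N}\mathrm{e}_p(n)\,g(n),\qquad g(n) := \mathrm{e}_p\big(w(n)\big).
\]
Here the hypotheses on $w$ enter decisively: since $w$ is additive, $g$ is a multiplicative function of modulus $1$; and since $w(q)=1$ at every prime $q$, we have $g(q) = \mathrm{e}_p(1) =: z$ constant over all primes, while $g(q^a)$ at higher prime powers are arbitrary unit-modulus numbers. I would expand the additive twist $\mathrm{e}_p(n)$ into Dirichlet characters modulo $p$. The principal character supplies the main term: its coefficient in the expansion of $\mathrm{e}_p(\cdot)$ is $-1/(p-1)$, and a Selberg--Delange / Wirsing mean-value theorem applied to $g$ (whose prime values all equal $z$) yields
\[
|\hat\nu(1)| \asymp \frac1p\,(\log N)^{\Re z - 1} = \frac1p\,(\log N)^{\cos(2\pi/p)-1}.
\]
Crucially, the generality of $w$ costs nothing: because $|g(q^a)|=1$, the higher-prime-power Euler factors are $1 + O(q^{-2})$ uniformly, so the Selberg--Delange constant stays bounded away from $0$ and the exponent is governed solely by the prime values $z$.

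Combining the two displays gives $\Xi(N) \gg N\,p^{-3/2}\exp\!\big(-2\pi^2 (\log\log N)/p^2\big)$, using $1-\cos(2\pi/p)\sim 2\pi^2/p^2$. Optimizing over primes $p$ of size $\asymp\sqrt{\log\log N}$ renders the exponential $\asymp 1$, so the bottleneck is the factor $p^{-3/2}\asymp(\log\log N)^{-3/4}$; this comfortably yields the asserted $\Xi(N)\gg N/\log\log N$ (a more conservative choice of $p$, or cruder bookkeeping of the error terms, still delivers the stated bound).

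The step I expect to be the main obstacle is making the estimate for $\hat\nu(1)$ rigorous with uniformity in a modulus $p$ that grows with $N$. One must verify that the non-principal characters $\chi \bmod p$ contribute only $o$ of the main term: each corresponds to the multiplicative function $\chi g$, whose prime values $\chi(q)z$ average to $0$, so its partial sums are genuinely smaller — but quantifying this \emph{uniformly} in $p$, and controlling the Selberg--Delange error terms for $p$ as large as $\sqrt{\log\log N}$, is delicate and constitutes the technical core. A secondary point to dispatch is that $w$ may be large on prime powers; this is invisible to the character sum (which sees only $w \bmod p$) and affects only the negligible boundary term in the opening display.
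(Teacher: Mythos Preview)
Your architecture matches the paper's --- deficit over residue classes modulo a prime $p\asymp\sqrt{\log\log N}$, Fourier expansion, Selberg--Delange for the principal part, optimisation in $p$ --- but two quantitative steps are off, and together they cost you the theorem. First, the claim $|\hat\nu(1)|\asymp p^{-1}(\log N)^{\cos(2\pi/p)-1}$ is too large by a full factor of $p$. The expansion of $\mathrm e_p(n)$ in Dirichlet characters is valid only for $(n,p)=1$; the terms with $p\mid n$ (where $\mathrm e_p(n)=1$ but every $\chi(n)=0$) must be added separately, and they cancel the principal--character contribution to leading order. In the paper's notation one finds $N\hat\nu(1)=S_1-\tfrac{p}{p-1}S_{1,\chi_0}+(\text{non-principal})$, and since $S_1$ and $S_{1,\chi_0}$ share the same leading term $cN(\log N)^{z-1}$, the surviving prefactor is $1-\tfrac{p}{p-1}d_{1,p}(1)$, which the paper computes to be of exact order $p^{-2}$. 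Thus in fact $|\hat\nu(1)|\asymp p^{-2}(\log N)^{\cos(2\pi/p)-1}$.

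Second, your Parseval step $\sum_r|\nu_r-1/p|\ge p^{-1/2}|\hat\nu(1)|$ loses a further $p^{1/2}$. With the corrected $|\hat\nu(1)|$ your argument yields only $\Xi(N)\gg N p^{-5/2}\exp(-2\pi^2(\log\log N)/p^2)\gg N(\log\log N)^{-5/4}$, short of the goal. (Your two inaccuracies happen to be of the same size in opposite directions, which is why the final line looked comfortable.) The paper avoids the $\ell^2$ loss by noting that for $\asymp p$ residues $r$ one has $\operatorname{Re}\!\big(e(-r/p)A_1\big)\le -\tfrac12|A_1|$, so that summing the deficits gives $\Xi(N)\gg N|A_1|$ directly; but this step is legitimate only after showing that the remaining Fourier modes $\hat\nu(t)$, $2\le t\le (p-1)/2$, are collectively dominated by the $t=\pm1$ term. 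That is the $B_3$ estimate in the paper, and it is not automatic: it forces a specific choice of the constant $\alpha$ in $p\sim\alpha^{-1}\sqrt{\log\log N}$. Your proposal does not touch this, and without it the route through a single Fourier coefficient cannot reach $N/\log\log N$. The issue you flagged as the ``main obstacle'' (uniformity over non-principal $\chi$) is real but is handled in the paper by a zero-free region plus the device of choosing $p$ unexceptional; the genuine gap is the cancellation and the control of the higher modes above.
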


The main idea is to consider $n + w(n)$ modulo a prime number. For a prime number $p$ and $r \in \mathbb{Z}/p \mathbb{Z}$ let us denote

\[
a(r) := \sum_{\substack{n \le N \\ n + w(n) \equiv r \pmod p}} 1.
\]

\begin{lemma}
\label{idea}
Let $R$ be a subset of $\mathbb{Z}/ p \mathbb{Z}$. Then
\[
\Xi(N) \ge \sum_{r \in R} (N/p - a(r) - 1).
\]
\end{lemma}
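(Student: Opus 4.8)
The plan is to establish the bound one residue class at a time and then sum over $R$. Fix $r \in R$ and consider the arithmetic progression $P_r = \{m \in [1,N] : m \equiv r \pmod p\}$. First I would record the elementary count $|P_r| \ge \lfloor N/p \rfloor \ge N/p - 1$, using that the $N$ integers $1, \dots, N$ distribute among the residue classes mod $p$ so that each class receives at least $\lfloor N/p \rfloor$ of them.

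Next I would bound from above the number of elements of $P_r$ that are representable. The crucial structural remark is that if $m = n + w(n)$ then $n \le m$, because $w$ is non-negative; hence any representable $m \le N$ admits a witness $n \le N$, and $m \equiv r \pmod p$ holds exactly when $n + w(n) \equiv r \pmod p$. Consequently each representable $m \in P_r$ equals $n + w(n)$ for some $n$ counted by $a(r)$, so the number of representable $m \in P_r$ is at most $a(r)$.

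Subtracting, the number of $m \in P_r$ lying in $E$ is at least $|P_r| - a(r) \ge N/p - a(r) - 1$. Since the progressions $P_r$ for distinct $r \in R$ are pairwise disjoint subsets of $[1,N]$, these contributions to $E \cap [1,N]$ do not overlap, and summing the per-class bound over $r \in R$ yields $\Xi(N) \ge \sum_{r \in R}(N/p - a(r) - 1)$, as desired.

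The step I expect to demand the most care is the upper bound on the representable elements of $P_r$. One must be comfortable that $a(r)$ is a legitimate upper bound even though it counts every $n \le N$ with $n + w(n) \equiv r$, including those for which $n + w(n) > N$: dropping the constraint $n + w(n) \le N$ only inflates the count, and it is precisely the non-negativity of $w$ that guarantees a representable $m \le N$ never requires a witness $n > N$. The remaining ingredients — the class count and the disjointness of the progressions — are routine.
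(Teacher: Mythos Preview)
Your proof is correct and follows essentially the same route as the paper: partition $[1,N]$ into residue classes $N_r$, use $|N_r|\ge N/p-1$, bound $|N_r\cap E^c|\le a(r)$, and sum over $r\in R$. You actually spell out more carefully than the paper why the non-negativity of $w$ guarantees that every representable $m\le N$ has a witness $n\le N$, which is exactly the point behind the inequality $|N_r\cap E^c|\le a(r)$.
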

\begin{proof}
Let
\[
N_r := \{n \, | \, n \le N, \, n \equiv r \pmod p\}.
\]
Note that
\[
[1, N] \cap \mathbb{Z} = \coprod_{r \in \mathbb{Z}/ p \mathbb{Z}} N_r, \quad |N_r| \ge N/p - 1, \quad |N_r \cap E^c| \le a(r).
\]
Hence
\[
|N_r \cap E| = |N_r| - |N_r \cap E^c| \ge (N/p - a(r) - 1).
\]
Finally,
\[
\Xi(N) = \sum_{r \in \mathbb{Z}/ p \mathbb{Z}} |N_r \cap E| \ge \sum_{r\in R} |N_r \cap E| \ge \sum_{r \in R} (N/p - a(r) - 1).
\]
\end{proof}

In $\cite{Changa}$ M. E. Changa  studied the distribution of $w(n)$ in a residue ring.
Using the same method we study the asymptotic behavior of $a(r)$. 

It turns out that for any fixed $p$
\[
a(r) \sim N/p.
\]

That is the reason we can't prove the Conjecture directly using the observation of Lemma \ref{idea}. Nevertheless, we are able to find the second term in the asymptotic expansion of $a(r)$. Then Theorem \ref{main} follows by an appropriate choice of $p$ and $R$.

Let
\[
G(N) := |\{(n, m) \, : \, n + w(n) = m + w(m), n \le N, m \le N, n \ne m\}|.
\]

\begin{prop} \label{G}
If $w(n) \ll n^{\varepsilon}$ and $0 < \varepsilon < 1$, then
\[
G(N) \gg \Xi(N).
\]
\end{prop}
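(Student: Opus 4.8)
The plan is to count the pairs defining $G(N)$ by grouping them according to their common value of $f(n) := n + w(n)$. For each integer $v$ set $r(v) := |\{n \le N : n + w(n) = v\}|$, the number of preimages of $v$ under $f$ inside $[1,N]$. Splitting the ordered pairs $(n,m)$ by the value $v = f(n) = f(m)$ gives the exact identity $G(N) = \sum_{v} r(v)(r(v)-1)$, while counting preimages trivially gives $\sum_v r(v) = N$. The whole argument rests on comparing the number of distinct values actually taken by $f$ with the number $N$ of inputs.

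First I would pin down the support of $r$. Since $w \ge 0$ we have $f(n) \ge n \ge 1$, and since $w(n) \ll n^{\varepsilon}$ there is a constant $C$ with $f(n) \le N + C N^{\varepsilon}$ for every $n \le N$; thus every value of $f$ on $[1,N]$ lies in $[1, N + CN^{\varepsilon}]$. Writing $S := f([1,N])$ and $I := |S|$, I would note that a value $v \le N$ lies in $S$ precisely when $v \in E^{c}$ (any representation $v = n + w(n)$ forces $n \le v \le N$, so it is witnessed inside $[1,N]$); hence $|S \cap [1,N]| = N - \Xi(N)$. The only other values of $S$ lie in $(N, N + CN^{\varepsilon}]$, an interval containing at most $CN^{\varepsilon}$ integers, so
\[
I \le N - \Xi(N) + C N^{\varepsilon}.
\]

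The second step is a one-line convexity estimate: for every $v$ with $r(v) \ge 1$ one has $r(v)(r(v)-1) \ge r(v) - 1$, the difference being $(r(v)-1)^{2} \ge 0$, while the terms with $r(v)=0$ contribute nothing. Summing over $v \in S$ therefore yields
\[
G(N) = \sum_{v \in S} r(v)(r(v)-1) \ge \sum_{v \in S} (r(v) - 1) = N - I \ge \Xi(N) - C N^{\varepsilon}.
\]
(Cauchy--Schwarz would give the weaker $G(N) \ge N^{2}/I - N$, leading to the same conclusion up to a constant.)

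Finally I would absorb the error term using Theorem \ref{main}. Since $0 < \varepsilon < 1$, the bound $\Xi(N) \gg N/\log\log N$ forces $N^{\varepsilon} = o(\Xi(N))$, so $CN^{\varepsilon} \le \tfrac12 \Xi(N)$ for all large $N$ and hence $G(N) \ge \tfrac12 \Xi(N)$; the finitely many small $N$ are swallowed by the implied constant. This gives $G(N) \gg \Xi(N)$. The genuinely delicate point is the boundary contribution from the values of $f$ that exceed $N$: these $O(N^{\varepsilon})$ ``escaping'' values could in principle account for all of $\Xi(N)$ on their own (if $f$ were injective on $[1,N]$ we would have $G(N)=0$ yet $\Xi(N)$ could reach $CN^{\varepsilon}$), so it is essential to invoke the lower bound of Theorem \ref{main} to guarantee that $\Xi(N)$ dominates $N^{\varepsilon}$.
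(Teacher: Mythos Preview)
Your argument is correct and is essentially the same as the paper's: both reduce to the inequality $G(N) \ge \Xi(N) - C N^{\varepsilon}$ via the elementary bound $r(v)(r(v)-1) \ge r(v)-1$, the only difference being bookkeeping (the paper sums $g(n)$ over $n\le N$ and loses the $O(N^{\varepsilon})$ escaping \emph{inputs}, whereas you sum over all values and lose the $O(N^{\varepsilon})$ escaping \emph{values}). You are also right, and more explicit than the paper, that the final step $\Xi(N)-CN^{\varepsilon}\gg\Xi(N)$ genuinely relies on Theorem~\ref{main}.
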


\begin{proof}
We have $w(n) \le C n^{\varepsilon}$ for some constant $C$.

Let us denote 
\[
g(n) := |\{ m \, : \, m + w(m) = n, m \le N \}|.
\]

Note that if $n \le N - C N^\varepsilon$, then $n + w(n) \le N$. Hence
\[
\sum_{n \le N} g(n) = \sum_{\substack{n \ge 1 \\ n + w(n) \le N}} 1 \ge N - C N^\varepsilon.
\]

We have
\[
G(N) \ge \sum_{n \le N} \binom{g(n)}{2} = \sum_{n \in [1, N] \cap E^c} \binom{g(n)}{2} \ge \sum_{n \in [1, N] \cap E^c} (g(n) - 1) \ge
\]
\[
(N - C N^\varepsilon) - |[1, N] \cap E^c| = \Xi(N) - C N^{\varepsilon} \gg \Xi(N).
\]
\end{proof}

In $\cite{Erdos1}$, P. Erd\H{o}s, A. S\'ark\H{o}zy, C. Pomerance showed that if $w(n)$ is the number of distinct prime factors of $n$, then

\[
G(N) \ge N \exp(-4000 \log \log N \log \log \log N)
\]
for $N$ large enough.

So Theorem \ref{main} in view of Proposition \ref{G} improves this bound and gives

\[
G(N) \gg \frac{N}{\log \log N}.
\]

\section{Lemmas}

\begin{lemma}[Perron's formula] \label{Perron}
Let
\[
F(s) = \sum_{n = 1}^{\infty} \frac{a(n)}{n^s},
\]
where the series converges absolutely for $\operatorname{Re} s > a \ge 0$, and let
\[
\sum_{n = 1}^\infty \frac{|a_n|}{n^\sigma} \ll (\sigma - a)^{-\alpha}
\]
as $\sigma \rightarrow a + 0$ for some $\alpha > 0$.
 Then for every $b > a$, $x \ge 2$ and $T \ge 2$ we have
\[
\sum_{n \le x} a(n) = \frac{1}{2 \pi i} \int_{b - iT}^{b + iT} F(s) \frac{x^s}{s} ds + R(x),
\]
where
\[
R(x) \ll \frac{x^b}{T(b-a)^\alpha} + 2^b \left( \frac{x \log x}{T} + \log \frac{T}{b} + 1 \right) \max_{x/2 \le n \le 3x/2} |a(n)|.
\]

\end{lemma}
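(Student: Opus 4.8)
The plan is to prove this truncated Perron formula in the classical way: reduce everything to the elementary ``discontinuous integral'' and then integrate the Dirichlet series term by term against it. The first step is to establish the Perron kernel estimate for $\delta(y,T) := \frac{1}{2\pi i}\int_{b-iT}^{b+iT}\frac{y^s}{s}\,ds$, $y>0$. For $y\neq 1$ I would shift the vertical contour horizontally: when $y>1$ the factor $y^s$ decays to the left, so pushing $\operatorname{Re}s\to-\infty$ collects the residue of $y^s/s$ at $s=0$, namely $1$, while the two horizontal segments contribute $O\bigl(\frac{y^b}{T|\log y|}\bigr)$; when $0<y<1$ one shifts to the right and meets no pole. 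This gives $|\delta(y,T)-\mathbf 1_{y>1}|\le \frac{y^b}{\pi T|\log y|}$. Independently, bounding the integral trivially yields the complementary estimate $|\delta(y,T)|\le \frac{y^b}{\pi}\int_{-T}^{T}\frac{dt}{\sqrt{b^2+t^2}}\ll y^b\log\frac{2T}{b}$, and for $y=1$ a direct computation gives $\delta(1,T)=\frac{1}{\pi}\arctan(T/b)$, hence $|\delta(1,T)|\le\frac12$. Together these furnish, for $y\neq 1$, a bound of the form $y^b\min\bigl(\log\frac{2T}{b}+O(1),\ \frac{1}{T|\log y|}\bigr)$.

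Next, because the series converges absolutely on the line $\operatorname{Re}s=b>a$ and the contour is a compact segment, I would interchange summation and integration to write $\frac{1}{2\pi i}\int_{b-iT}^{b+iT}F(s)\frac{x^s}{s}\,ds=\sum_{n=1}^{\infty}a(n)\,\delta(x/n,T)$. Since $\delta(x/n,T)$ approximates $\mathbf 1_{n<x}$, the main term is $\sum_{n\le x}a(n)$, the possible boundary value at $n=x$ costing only $O(\max|a(n)|)$, and the remainder is $R(x)=\sum_{n}a(n)\bigl(\delta(x/n,T)-\mathbf 1_{n\le x}\bigr)$, to which the kernel estimates are applied termwise.

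Then I would split the remainder into two ranges. In the far range $n\le x/2$ or $n\ge 3x/2$ we have $|\log(x/n)|\ge\log(3/2)$, so each term is $\ll\frac{(x/n)^b}{T}$; summing and invoking the hypothesis $\sum_n |a(n)|\,n^{-b}\ll (b-a)^{-\alpha}$ (taking $\sigma=b$) produces the first error term $\frac{x^b}{T(b-a)^{\alpha}}$. In the near range $x/2<n<3x/2$ I bound $(x/n)^b\le 2^b$ and $|a(n)|\le M:=\max_{x/2\le n\le 3x/2}|a(n)|$, reducing everything to estimating $\sum_{x/2<n<3x/2}\min\bigl(\log\frac{2T}{b},\ \frac{1}{T|\log(x/n)|}\bigr)$. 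Using $|\log(x/n)|\gg |x-n|/x$ on this range, the integers with $|x-n|\lesssim x/(T\log(2T/b))$ contribute $O\bigl(\log\frac{T}{b}+1\bigr)$ terms through the flat bound, while the remaining integers give $\ll\frac{x}{T}\sum_{k\le x}\frac1k\ll\frac{x\log x}{T}$; multiplying by $2^bM$ yields the second error term.

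The main obstacle is precisely this near-range estimate: choosing the crossover between the two kernel bounds, correctly counting the integers lying extremely close to $x$, and summing the resulting harmonic series so as to land exactly on the shape $\frac{x\log x}{T}+\log\frac{T}{b}+1$. By comparison, the far range and the interchange of summation and integration are routine, and the kernel lemma itself is a short contour argument.
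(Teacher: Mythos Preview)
Your sketch is the standard textbook derivation and is correct in outline and in the key estimates. Note, however, that the paper does not actually prove this lemma: its entire proof reads ``See, for example, \cite[Theorem~7, p.~20]{Changa_book} or \cite[Theorem~1, p.~64]{Karatsuba}.'' What you have written is essentially the argument one finds in those references, so there is nothing to compare beyond observing that you supplied a proof where the paper merely cited one.
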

\begin{proof}
See, for example, $\cite[\text{Theorem}~7,\text{p.}~20]{Changa_book}$ or $\cite[\text{Theorem}~1,\text{p.}~64]{Karatsuba}$.
\end{proof}

\begin{lemma}[M. E. Changa]
\label{Chang}
Let $f(n)$ be a complex valued arithmetic function such that $|f(n)| \le 1$, let the following equality hold for $\operatorname{Re} s > 1$:
\[
\sum_{n = 1}^\infty \frac{f(n)}{n^s} = (\zeta(s))^\alpha H(s),
\]
where $\alpha \in \mathbb{C}$, $|\alpha| \le 1$, 
and  the function $H(s), s = \sigma + it$, is analytic in the domain $\sigma > 1 - \frac{c_1}{\log  T} , |t| \le T$ for every sufficiently large $T$, and let the bound $H(s) \ll \log^{c_2} T$ hold in this domain. Then
\[
\sum_{n \le x} f(n) = \frac{H(1)}{\Gamma(\alpha)} x \log^{\alpha - 1} x + O(x \log^{\operatorname{Re} \alpha - 2} x).
\]
Here constant in $O$ depends only on $c_1, c_2, C$.
\end{lemma}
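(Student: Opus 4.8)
The natural route is the Selberg--Delange method: recover the partial sum by Perron's formula and then deform the contour around the branch-point singularity of $(\zeta(s))^\alpha$ at $s=1$, evaluating the resulting Hankel-type integral. First I would apply Lemma~\ref{Perron} to $F(s)=(\zeta(s))^\alpha H(s)$. Since $|f(n)|\le 1$, the absolute series is dominated by $\zeta(\sigma)\ll(\sigma-1)^{-1}$, so the hypotheses hold with $a=1$ and exponent $1$, and $\max_{x/2\le n\le 3x/2}|f(n)|\le 1$. Taking $b=1+1/\log x$ and $T=\exp(c\sqrt{\log x})$ for a small constant $c$, the remainder in Perron's formula is
\[
R(x)\ll \frac{x\log x}{T}+\log T \ll x\exp(-c'\sqrt{\log x}),
\]
which is negligible against the claimed error $O(x\log^{\operatorname{Re}\alpha-2}x)$.

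Second, I would isolate the singularity. In the classical zero-free region $\sigma>1-c_1/\log T$ the function $\zeta(s)$ has no zeros, so a branch of $(\zeta(s))^\alpha$ is well defined and analytic there except at $s=1$. Writing $\zeta(s)=(s-1)^{-1}\big((s-1)\zeta(s)\big)$ gives $(\zeta(s))^\alpha=(s-1)^{-\alpha}G(s)$, where $G(s):=\big((s-1)\zeta(s)\big)^\alpha$ is analytic near $s=1$ with $G(1)=1$; hence $F(s)=(s-1)^{-\alpha}\mathcal{H}(s)$ with $\mathcal{H}:=G\cdot H$ analytic near $s=1$ and $\mathcal{H}(1)=H(1)$. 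I would then deform the vertical segment $[b-iT,b+iT]$, using analyticity of $F$ in the zero-free region and Cauchy's theorem, into a truncated Hankel contour that descends to the line $\sigma=1-c_1/\log T$, encircles $s=1$ along the two sides of the cut $(-\infty,1]$, and is closed off by horizontal segments at heights $\pm iT$.

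Third, I would evaluate the main integral. On the loop around $s=1$ the substitution $s=1+z/\log x$ turns $\frac{1}{2\pi i}\int (s-1)^{-\alpha}x^s\,\frac{ds}{s}$ into $x(\log x)^{\alpha-1}\cdot\frac{1}{2\pi i}\int z^{-\alpha}e^{z}\,dz\,(1+O(1/\log x))$, and the Hankel representation
\[
\frac{1}{\Gamma(\alpha)}=\frac{1}{2\pi i}\int_{\mathcal{H}} z^{-\alpha}e^{z}\,dz
\]
produces the main term $\frac{H(1)}{\Gamma(\alpha)}\,x(\log x)^{\alpha-1}$ (which correctly vanishes when $\alpha$ is a non-positive integer, since then $1/\Gamma(\alpha)=0$). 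The secondary contributions, arising from $\mathcal{H}(s)=\mathcal{H}(1)+O(|s-1|)$ and from $\tfrac1s=1+O(|s-1|)$, are integrals of the shape $\int(s-1)^{1-\alpha}x^s\,ds\ll x(\log x)^{\operatorname{Re}\alpha-2}$, matching the stated error.

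Finally I would bound the discarded pieces. On the two horizontal segments $|x^s|\le x^{b}\ll x$ and $|s|^{-1}\le T^{-1}$, while the zero-free-region estimate $\zeta(s)\ll\log T$ together with $H(s)\ll\log^{c_2}T$ and $|\alpha|\le 1$ give $|F(s)|\ll\log^{c_2+1}T$; with $T=\exp(c\sqrt{\log x})$ the whole contribution is $\ll xT^{-1}\log^{c_2+1}T$, which is super-polynomially small, and the far-left vertical segment carries $x^{1-c_1/\log T}=x\exp(-c_1\log x/\log T)$, again negligible. I expect the delicate part to be exactly this error bookkeeping inside the zero-free region: one must keep the chosen branch of $(\zeta(s))^\alpha$ single-valued and uniformly controlled along the loop, and verify that the repeated $\log^{c_2}T$ losses are always dominated by the savings from $x^{\sigma-1}$. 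The main-term evaluation is then essentially the computation above; this is precisely the Selberg--Delange argument.
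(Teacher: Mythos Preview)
The paper does not actually prove this lemma: its entire proof is the one-line citation ``See [3, Lemma~3.1]'' to Changa's paper. Your Selberg--Delange sketch is the standard argument for statements of this form and is correct in outline; it is almost certainly what the cited reference does as well. One small point worth tightening: to control $|(\zeta(s))^\alpha|$ for complex $\alpha$ along the deformed contour you need a bound on $|\log\zeta(s)|$ (of the shape $\ll\log\log T$, obtained from the zero-free region via Borel--Carath\'eodory, exactly as the paper does for $\log L(s,\chi)$ in Lemma~\ref{log L}), not merely the upper bound $\zeta(s)\ll\log T$; you already flag this as the delicate step, so the sketch stands.
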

\begin{proof}
See $\cite[\text{Lemma}~3.1]{Changa}$.
\end{proof}

\begin{lemma} \label{Siegel0}
Let $T \ge 3$. If $\chi$ is a complex character modulo $k$ and $s = \sigma + i t$, then $L(s, \chi)$ has no zeroes in the domain
\[
\operatorname{Re} s = \sigma \ge 1 - \frac{c_3}{\log k T}, \, \, |t| \le T .
\]
If $\chi$ is a real character modulo $k$ and $s = \sigma + i t$, then $L(s, \chi)$ has no zeroes in the domain
\[
\operatorname{Re} s = \sigma \ge 1 - \frac{c_3}{\log k T}, \, \, 0 < |t|  \le T .
\]
\end{lemma}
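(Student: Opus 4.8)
The plan is to run the classical de la Vallée Poussin argument, whose engine is the elementary inequality $3 + 4\cos\theta + \cos 2\theta = 2(1+\cos\theta)^2 \ge 0$. First I would record, for $\operatorname{Re} s > 1$, the Dirichlet series
\[
-\frac{L'}{L}(s,\chi) = \sum_{n=1}^\infty \frac{\Lambda(n)\chi(n)}{n^s},
\]
and the analogous one for $-\zeta'/\zeta$. Writing $\chi(n) = e^{i\vartheta_n}$ for $(n,k)=1$ and taking real parts at the three points $\sigma$, $\sigma+it$, $\sigma+2it$, the coefficient of $\Lambda(n)n^{-\sigma}$ in the combination
\[
3\Bigl(-\tfrac{\zeta'}{\zeta}(\sigma)\Bigr) + 4\operatorname{Re}\Bigl(-\tfrac{L'}{L}(\sigma+it,\chi)\Bigr) + \operatorname{Re}\Bigl(-\tfrac{L'}{L}(\sigma+2it,\chi^2)\Bigr)
\]
equals $3 + 4\cos(\vartheta_n - t\log n) + \cos(2\vartheta_n - 2t\log n) \ge 0$, so the whole combination is $\ge 0$ for every $\sigma > 1$ and every real $t$.

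Next I would convert each logarithmic derivative into a sum over zeros. Via the Hadamard product for $L(s,\chi)$ together with the functional equation and a standard bound on the number of zeros in a unit box, one obtains for $1 < \sigma \le 2$ the estimate
\[
-\operatorname{Re}\frac{L'}{L}(s,\chi) \le A\log\bigl(k(|t|+2)\bigr) - \sum_\rho \operatorname{Re}\frac{1}{s-\rho},
\]
where $\rho$ runs over the nontrivial zeros and every term $\operatorname{Re}\frac{1}{s-\rho} = \frac{\sigma-\beta}{(\sigma-\beta)^2+(t-\gamma)^2} \ge 0$. For $\zeta$ one gets the same with the pole isolated as $\frac{1}{\sigma-1}$, and when $\chi^2$ is principal the pole of $L(s,\chi^2)$ contributes an extra $\frac{\sigma-1}{(\sigma-1)^2+4t^2}$. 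Deriving these partial-fraction bounds, with explicitly nonnegative zero-terms, is the technical backbone of the argument.

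Now suppose $\rho_0 = \beta_0 + i\gamma_0$ is a zero of $L(s,\chi)$ with $|\gamma_0| \le T$, set $t = \gamma_0$, and choose $\sigma = 1 + \delta/\log(kT)$ for a small $\delta$ to be fixed. Dropping all zero-terms except $\rho_0$ (legitimate, since they are nonnegative and subtracted) leaves $-4/(\sigma-\beta_0)$ from the middle term. If $\chi$ is complex then $\chi^2$ is non-principal, there is no pole at $\sigma+2i\gamma_0$, and the combination reduces to
\[
0 \le \frac{3}{\sigma-1} - \frac{4}{\sigma-\beta_0} + A'\log(kT),
\]
which, after substituting $\sigma - 1 = \delta/\log(kT)$, forces $\sigma - \beta_0 \gg 1/\log(kT)$ once $\delta$ is small; this already yields $\beta_0 \le 1 - c_3/\log(kT)$ for all $\gamma_0$, including $\gamma_0 = 0$.

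The remaining case, $\chi$ real, is where the obstruction lives and is the step I expect to be hardest. Then $\chi^2 = \chi_0$ and the pole term $\frac{\sigma-1}{(\sigma-1)^2+4\gamma_0^2}$ appears on the right. When $|\gamma_0| \ge \sigma - 1$ this term is $\le \frac{1}{4(\sigma-1)} = O(\log kT)$ and is harmlessly absorbed, giving the conclusion as above. When $0 < |\gamma_0| < \sigma - 1$ the pole term can reach $(\sigma-1)^{-1}$, and to defeat it I would use that $\overline{\rho_0} = \beta_0 - i\gamma_0$ is also a zero, since $\chi$ is real: keeping both $\rho_0$ and $\overline{\rho_0}$ supplies the additional negative term $-\frac{4(\sigma-\beta_0)}{(\sigma-\beta_0)^2+4\gamma_0^2}$, which in the only dangerous regime $\beta_0 \to 1$ outweighs the pole term, and the argument closes. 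The case $\gamma_0 = 0$ genuinely cannot be handled this way, as the conjugate coincides with $\rho_0$ and nothing cancels the pole; this is precisely why a real character is allowed one exceptional real zero and why the statement restricts to $0 < |t| \le T$. Choosing $\delta$, hence $c_3 = c_3(\delta)$, small enough throughout completes both cases.
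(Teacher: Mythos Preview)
Your sketch is the classical de~la~Vall\'ee~Poussin argument and is correct in outline; the paper, however, does not prove this lemma at all---its entire proof is the single line ``See \cite[Theorem~2, p.~124]{Karatsuba}''. What you have written is essentially a compressed version of the proof that appears in that reference (and in Davenport, Montgomery--Vaughan, etc.), so there is no genuine methodological difference to report: you have simply supplied the argument that the paper chose to outsource.
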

\begin{proof}
See $\cite[\text{Theorem}~2,\text{p.}~124]{Karatsuba}$
\end{proof}

\begin{lemma}
\label{Siegel}
Let $3 \le X$. There exists at most one $3 \le k \le X$ and at most one real primitive character  $\chi_1$ modulo $k$ for which  $L(s, \chi_1)$ has a simple real zero $\beta_1$ such that
\[
\beta_1 \ge 1 - \frac{c_4}{\log X}.
 \]
\end{lemma}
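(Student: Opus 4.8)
The plan is to run the classical argument of Landau on products of $L$-functions. By Lemma \ref{Siegel0} the only zeros of an $L(s,\chi)$ in a region $\sigma \ge 1 - c_3/\log(kT)$ that are not already excluded are real zeros of real primitive characters, so it suffices to prove that among all real primitive $\chi$ with $3 \le k \le X$ at most one can have a real zero $\beta \ge 1 - c_4/\log X$, and that such a zero is simple. Suppose, for contradiction, that $\chi_1 \pmod{k_1}$ and $\chi_2 \pmod{k_2}$ are two \emph{distinct} real primitive characters, $3 \le k_1, k_2 \le X$, with real zeros $\beta_1, \beta_2 \ge 1 - c_4/\log X$. I would then introduce the auxiliary function
\[
F(s) = \zeta(s)\, L(s,\chi_1)\, L(s,\chi_2)\, L(s,\chi_1\chi_2).
\]
Since $\chi_1 \ne \chi_2$ are real, $\chi_1\chi_2$ is a nonprincipal character, so $L(s,\chi_1\chi_2)$ is entire and $F$ is holomorphic apart from the simple pole of $\zeta$ at $s=1$; moreover $\beta_1$ and $\beta_2$ are both zeros of $F$.

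The heart of the matter is the positivity of $F$. Taking logarithms of the Euler product,
\[
\log F(s) = \sum_{p}\sum_{m \ge 1} \frac{\bigl(1 + \chi_1(p^m)\bigr)\bigl(1 + \chi_2(p^m)\bigr)}{m\, p^{ms}},
\]
and since each $\chi_i(p^m) \in \{-1, 0, 1\}$ every coefficient here is non-negative. Differentiating, for real $\sigma > 1$ one obtains
\[
-\frac{F'}{F}(\sigma) = \sum_{p}\sum_{m \ge 1} \bigl(1+\chi_1(p^m)\bigr)\bigl(1+\chi_2(p^m)\bigr) \frac{\log p}{p^{m\sigma}} \ge 0.
\]
On the other hand, expanding $-F'/F$ through the Hadamard products of $\zeta$ and of the three $L$-functions, absorbing the archimedean and conductor contributions into a term $O(\log X)$ (all moduli are $\le X$), I would keep only the pole at $s=1$ and the two zeros $\beta_1, \beta_2$, discarding every other zero $\rho$ since $\operatorname{Re}\frac{1}{\sigma - \rho} \ge 0$ for $\sigma > 1 > \operatorname{Re}\rho$. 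This yields, for $1 < \sigma \le 2$,
\[
0 \le -\frac{F'}{F}(\sigma) \le \frac{1}{\sigma - 1} - \frac{1}{\sigma - \beta_1} - \frac{1}{\sigma - \beta_2} + O(\log X).
\]

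To finish I would choose $\sigma = 1 + \delta/\log X$. Using $\sigma - 1 = \delta/\log X$ and $\sigma - \beta_i \le (\delta + c_4)/\log X$, the displayed inequality becomes, after dividing by $\log X$,
\[
\frac{2}{\delta + c_4} \le \frac{1}{\delta} + A,
\]
where $A$ is the absolute constant implied by the $O(\log X)$. Fixing $\delta = 1/(2A)$ makes the left-hand side exceed the right-hand side once $c_4$ is taken small enough (depending only on $A$), a contradiction; this pins down the admissible value of $c_4$. Hence at most one exceptional character, and in particular at most one modulus $k$, can occur. The simplicity of the zero follows from the same mechanism applied to the smaller product $\zeta(s) L(s,\chi_1)$, whose coefficients are again non-negative because $1 + \chi_1(p^m) \ge 0$: a zero of multiplicity $m$ contributes $-m/(\sigma - \beta_1)$, and the identical computation with $2$ replaced by $m \ge 2$ produces the same contradiction, forcing $m = 1$. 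The main obstacle is the derivation of the uniform bound for $-F'/F(\sigma)$ above, i.e.\ controlling the Hadamard-product expansion uniformly in the conductors $k_1, k_2 \le X$; once this is in place the choice of $\sigma$ and the extraction of the contradiction are routine.
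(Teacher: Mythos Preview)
Your argument is the classical Landau argument and is correct; the paper itself does not give an independent proof of this lemma but simply cites Karatsuba's textbook (Corollary~2, p.~131), where precisely this product $\zeta(s)L(s,\chi_1)L(s,\chi_2)L(s,\chi_1\chi_2)$ argument is carried out. Hence your approach coincides with the one in the cited reference.
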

\begin{proof}
See $\cite[\text{Corollary}~2,\text{p.}~131]{Karatsuba}$.
\end{proof}

We call a prime number $p < X$ unexceptional if  $L(s, \chi)$ has no real zeroes on the interval
\[
\left[ 1 - \frac{\min(c_3, c_4)}{\log X}, +\infty \right)
 \]
for every character $\chi$ modulo $p$. Otherwise we call $p$ exceptional. Note that the notion of being exceptional depends on $X$. Lemma \ref{Siegel} implies that there exist at most one exceptional $p < X$. 

\begin{lemma}
\label{short}
For every sufficiently large $X$, the interval
\[
\left[ X - X^{4/5}, X \right]
\]
contains at least two prime numbers.
\end{lemma}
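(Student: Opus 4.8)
The plan is to deduce the statement from the distribution of primes in short intervals. It suffices to show that
\[
\pi(X) - \pi\bigl(X - X^{4/5}\bigr) \longrightarrow \infty,
\]
since a count tending to infinity certainly exceeds $2$ for $X$ large. I would work with the weighted count $\psi(x) = \sum_{n \le x}\Lambda(n)$ and establish the sharper asymptotic $\psi(X) - \psi(X - X^{4/5}) \sim X^{4/5}$. The contribution of genuine prime powers $p^k$ with $k \ge 2$ lying in $[X - X^{4/5}, X]$ is $O(X^{3/10}\log X)$, which is negligible, so such an asymptotic for $\psi$ transfers to the analogous asymptotic for the Chebyshev function $\theta$, forcing $\gg X^{4/5}/\log X$ primes in the interval.

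Next, writing $y = X^{4/5}$, I would apply Perron's formula (Lemma~\ref{Perron}) to $-\zeta'/\zeta = \sum_n \Lambda(n) n^{-s}$, or equivalently the truncated Riemann--von Mangoldt explicit formula up to a height $T$, to obtain
\[
\psi(X) - \psi(X - y) = y - \sum_{|\gamma| \le T} \frac{X^{\rho} - (X - y)^{\rho}}{\rho} + O\!\left( \frac{X \log^2 X}{T} \right),
\]
the sum running over the non-trivial zeros $\rho = \beta + i\gamma$ of $\zeta(s)$. From $X^{\rho} - (X-y)^{\rho} = \rho \int_{X-y}^{X} t^{\rho - 1}\,dt$ one gets the bound $|X^{\rho} - (X-y)^{\rho}|/|\rho| \le y\, X^{\beta - 1}$, so the zero sum is controlled by $y \sum_{|\gamma| \le T} X^{\beta - 1} = X^{-1/5}\sum_{|\gamma|\le T} X^{\beta}$.

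The crux is to show that this last sum is $o(y) = o(X^{4/5})$. A zero-free region of the type recorded in Lemma~\ref{Siegel0} is by itself insufficient here: with $T$ a fixed power of $X$ it only yields $X^{\beta - 1} \le \exp(-c \log X / \log T)$, which saves a mere constant factor. The missing ingredient, and the main obstacle, is a zero-density estimate of Ingham type,
\[
N(\sigma, T) := \#\{\rho : \beta \ge \sigma,\ |\gamma| \le T\} \ll T^{A(1 - \sigma)} \log^{B} T,
\]
valid for $\tfrac12 \le \sigma \le 1$ with a sufficiently small admissible exponent $A$. Combining such a density bound with the zero-free region and optimizing the truncation height (taking $T$ near $X^{1/5}$) renders $X^{-1/5}\sum_{|\gamma| \le T} X^{\beta} = o(X^{4/5})$, which completes the argument. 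Since the exponent $4/5$ comfortably exceeds the classical thresholds $5/8$ (Ingham) and $7/12$ (Huxley) for primes in short intervals, the most economical route in practice is simply to cite one of these standard theorems, from which Lemma~\ref{short} is immediate.
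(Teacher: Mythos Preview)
Your proposal is correct. The paper's own proof is simply a citation to Baker--Harman--Pintz \cite{Cons_primes} and to \cite[Theorem~2, p.~98]{Karatsuba}, so your concluding recommendation to invoke a standard short-interval prime theorem (Ingham or Huxley, either of which already covers the exponent $4/5$) matches the paper's approach exactly; the explicit-formula and zero-density sketch you supply is more detail than the paper itself provides.
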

\begin{proof}
See $\cite{Cons_primes}$ or $\cite[\text{Theorem}~2,\text{p.}~98]{Karatsuba}$.
\end{proof}

\begin{lemma}[Borel–Carathéodory theorem]
\label{Bor}
Let $R > 0$, and let the function $f(s)$ be analytic in the circle $|s - s_0| \le R$. Moreover assume that $\operatorname{Re} f(s) \le M$ on the circle $|s - s_0| = R$. Then
in the disk $|s - s_0| \le r < R$ we have
\[
|f(s) - f(s_0)| \le 2(M - \operatorname{Re} f(s_0)) \frac{r}{R - r}.
\]
\end{lemma}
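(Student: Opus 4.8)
The plan is to reduce the statement to an application of the Schwarz lemma after a normalization and a single well-chosen conformal change of variable. First I would normalize by setting $\phi(s) = f(s) - f(s_0)$, so that $\phi(s_0) = 0$, and writing $A := M - \operatorname{Re} f(s_0)$, so that the hypothesis becomes $\operatorname{Re}\phi \le A$ on the circle $|s - s_0| = R$. Since $\operatorname{Re}\phi$ is harmonic with boundary values at most $A$, the maximum principle gives $\operatorname{Re}\phi \le A$ throughout the closed disk; in particular $0 = \operatorname{Re}\phi(s_0) \le A$, so $A \ge 0$. If $A = 0$, then $\operatorname{Re}\phi \le 0$ attains its maximum value $0$ at the interior point $s_0$, which forces $\operatorname{Re}\phi \equiv 0$ and hence $\phi \equiv 0$; the asserted inequality then holds trivially, so I may assume $A > 0$.

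The heart of the argument is to introduce the auxiliary function
\[
\psi(s) = \frac{\phi(s)}{2A - \phi(s)}.
\]
Because $\operatorname{Re}\phi \le A < 2A$ on the whole disk, the denominator never vanishes, so $\psi$ is analytic on $|s - s_0| \le R$, and clearly $\psi(s_0) = 0$. The key computation is to check that $|\psi| \le 1$ on the boundary circle: writing $\phi = u + iv$ with $u \le A$ there, one has $(2A - u)^2 - u^2 = 4A(A - u) \ge 0$, whence $|2A - \phi|^2 \ge |\phi|^2$ and therefore $|\psi| \le 1$ on $|s - s_0| = R$. By the maximum modulus principle $|\psi| \le 1$ throughout the disk, and since $\psi(s_0) = 0$ the Schwarz lemma yields $|\psi(s)| \le |s - s_0|/R$ for all $|s - s_0| \le R$.

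Finally I would invert the transformation. Solving $\psi(2A - \phi) = \phi$ gives $\phi = 2A\psi/(1 + \psi)$, so that
\[
|\phi(s)| = \frac{2A\,|\psi(s)|}{|1 + \psi(s)|} \le \frac{2A\,|\psi(s)|}{1 - |\psi(s)|}.
\]
On the subdisk $|s - s_0| \le r$ we have $|\psi(s)| \le r/R < 1$, and since $t \mapsto t/(1 - t)$ is increasing on $[0, 1)$, this gives $|\phi(s)| \le 2A(r/R)/(1 - r/R) = 2Ar/(R - r)$, which is exactly the claimed bound $|f(s) - f(s_0)| \le 2(M - \operatorname{Re} f(s_0))\,r/(R - r)$.

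I do not anticipate a serious obstacle. The only genuinely delicate point is guessing the correct Möbius transform: the map $w \mapsto w/(2A - w)$ is chosen precisely because it fixes $0$ and sends the line $\operatorname{Re} w = A$ onto the unit circle (for $w = A + iv$ one computes $|w/(2A-w)| = |A+iv|/|A-iv| = 1$), thereby converting the one-sided real-part bound $\operatorname{Re}\phi \le A$ into the two-sided modulus bound $|\psi| \le 1$ that Schwarz's lemma requires. Everything else is routine verification, together with the separate treatment of the degenerate case $A = 0$.
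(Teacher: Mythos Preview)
Your argument is correct and is the standard proof of the Borel--Carath\'eodory inequality via the M\"obius map $w \mapsto w/(2A - w)$ and Schwarz's lemma; the normalization, the handling of the degenerate case $A=0$, and the final inversion step are all carried out cleanly. The paper itself does not actually prove this lemma---it merely cites \cite[Lemma~4,~p.~35]{Karatsuba}---so you have supplied strictly more than the paper does, and your proof is precisely the classical one that such a reference would contain.
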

\begin{proof}
See, for example, $\cite[\text{Lemma}~4,\text{p.}~35]{Karatsuba}$. 
\end{proof}

\begin{lemma} \label{L=O}
Let $\chi$ be a non-principal character modulo $k$. Then the following bound holds in the domain $|\sigma - 1| \le c / \log k T$, $|t| \le T$:
\[
L(s, \chi) = O(\log k T).
\]
\end{lemma}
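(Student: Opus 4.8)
The plan is to start from the Dirichlet series and use the classical truncation device: cut the series at a well-chosen point $X$, estimate the head by a crude bound, and control the tail by partial summation against the bounded character sum. First I would record the fundamental estimate that for a non-principal character $\chi \pmod k$ the partial sums $S(x) := \sum_{n \le x} \chi(n)$ satisfy $|S(x)| \le k$ for every $x \ge 1$, since $\chi$ sums to $0$ over each complete residue system and the incomplete piece has at most $k$ terms of modulus one. By Abel summation this bound also guarantees that $\sum_n \chi(n) n^{-s}$ converges for $\operatorname{Re} s > 0$, and hence represents $L(s,\chi)$ throughout our region, where $\sigma \ge 1 - c/\log kT > 0$ once $kT$ is large.

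Next I would split
\[
L(s,\chi) = \sum_{n \le X} \frac{\chi(n)}{n^s} + \sum_{n > X} \frac{\chi(n)}{n^s}
\]
and apply partial summation to the tail, writing
\[
\sum_{n > X} \frac{\chi(n)}{n^s} = -\frac{S(X)}{X^s} + s \int_X^\infty \frac{S(u)}{u^{s+1}}\, du ,
\]
so that, using $|S(u)| \le k$ and $\sigma^{-1} \ll 1$ (as $\sigma \ge 1/2$ for $kT$ large), the tail is $\ll k X^{-\sigma} + |s|\, k\, X^{-\sigma}$. For the head I would use the trivial estimate $\bigl|\sum_{n \le X} \chi(n) n^{-s}\bigr| \le \sum_{n \le X} n^{-\sigma}$.

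The crux is then to take $X \asymp kT$ and exploit that in the strip $|\sigma - 1| \le c/\log kT$ the factors $n^{\pm(\sigma - 1)}$ are harmless, since $n^{c/\log kT} = \exp\bigl(c \log n / \log kT\bigr) \le e^{c}$ uniformly for $1 \le n \le kT$. With this, $n^{-\sigma} \ll n^{-1}$ gives a head of size $\ll \sum_{n \le X} n^{-1} \ll \log X \ll \log kT$, while $X^{\sigma} \gg X = kT$ and $|s| \ll 1 + T$ force each tail term to be $O(1)$. Summing the two contributions yields $L(s,\chi) = O(\log kT)$.

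I expect the only delicate point to be the bookkeeping around the truncation length: one must check that the single choice $X \asymp kT$ simultaneously renders the head logarithmic and the tail bounded, and that every $n^{\sigma-1}$ factor stays $O(1)$ across the whole strip, whether $\sigma$ lies just above or just below $1$. No genuine analytic obstacle arises, as the entire argument rests on the elementary bounded–partial–sum estimate for $S(x)$ together with this uniform smallness of the fractional powers.
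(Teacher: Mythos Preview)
Your proposal is correct and follows essentially the same route as the paper: truncate the Dirichlet series at $X=kT$, control the tail by partial summation against the trivial bound $|S(x)|\le k$, and bound the head by $\sum_{n\le kT} n^{-\sigma}\ll\log kT$ using that $n^{1-\sigma}\le (kT)^{c/\log kT}=e^{c}$ in the strip. The paper's argument is exactly this, with the same truncation point and the same two ingredients.
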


\begin{proof}
Let us denote $S(t) = \sum\limits_{n \le t} \chi(n)$. We have
\[
\sum_{n = N + 1}^{M} \frac{\chi(n)}{n^s} = \int_{N}^M \frac{d S(t)}{t^s} = \frac{S(M)}{M^s} - \frac{S(N)}{N^s} + s \int_{N}^M 
\frac{S(t) \, d t}{t^{s + 1}}.
\]
As $M$ approaches infinity, we find that
\[
L(s, \chi) = \sum_{n = 1}^{N} \frac{\chi(n)}{n^s} - \frac{S(N)}{N^s} + s \int_{N}^\infty  \frac{S(t) \, d t}{t^{s + 1}}.
\]
Hence
\[
L(s, \chi) - \sum_{n = 1}^{N} \frac{\chi(n)}{n^s} = O(k N^{-\sigma}) + O(k T \sigma^{-1} N^{-\sigma}).
\]
If $n \le k T$, then
\[
|n^{-s}| = n^{-\sigma} \le n^{-1} (k T)^{c / \log k T} \ll n^{-1}.
\]
If we take $N = k T$, we obtain
\[
L(s, \chi) = O(\log k T).
\]
\end{proof}

\begin{lemma} \label{log L}
Let $T \ge 3$, let $\chi$ be a non-principal character modulo $k$, and let $L(s, \chi)$ be nonzero in the domain 
\[
\sigma \ge 1 - \frac{3c}{\log k T}, \, \, |t| \le 2 T;
\]
then on the boundary of the domain
\[
\sigma \ge 1 - \frac{c}{\log k T}, \, \, |t| \le T
\]
we have $\log L(s, \chi) \ll \log \log k T$.
\end{lemma}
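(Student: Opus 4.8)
The plan is to use the Borel--Carath\'eodory theorem (Lemma~\ref{Bor}) to transfer control of $\log L$ from a point just to the right of the line $\sigma = 1$, where it is directly estimable from its Dirichlet series, out to the whole strip, with the upper bound $|L(s,\chi)| \ll \log kT$ of Lemma~\ref{L=O} serving as the majorant. First I would fix the branch: since $L(\cdot,\chi)$ is entire ($\chi$ being non-principal) and, by hypothesis, non-vanishing on the convex region $\{\sigma \ge 1 - 3c/\log kT,\ |t| \le 2T\}$, the function $f(s) := \log L(s,\chi)$ is single-valued and analytic there once it is normalized to agree with the absolutely convergent series $\sum_{p,m} \chi(p^m)/(m p^{ms})$ for $\sigma > 1$. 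For a boundary point $s = \sigma + it$ with $\sigma \ge 1 + c/\log kT$ this series already gives $|f(s)| \le \log\zeta(\sigma) \le \log\zeta\bigl(1 + c/\log kT\bigr) \ll \log\log kT$, so that part of the boundary needs no further argument.

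It remains to bound $f(s)$ for boundary points with $1 - c/\log kT \le \sigma \le 1 + c/\log kT$. Here I would apply Lemma~\ref{Bor} on a disk centred at $s_0 := (1 + c/\log kT) + i\,\mathrm{Im}(s)$ with two radii $r < R$, both of order $1/\log kT$ (say $r = 2c/\log kT$ and $R = 3c/\log kT$), chosen so that the target point $s$ lies in the closed disk of radius $r$ while the disk of radius $R$ still lies inside the zero-free region, where $f$ is analytic. On the circle $|z - s_0| = R$ one has $\mathrm{Re}\,f = \log|L| \le \log\log kT + O(1)$ by Lemma~\ref{L=O} (applied with $2T$ in place of $T$ and its constant enlarged by a bounded factor, using $\log 2kT \asymp \log kT$); this supplies the majorant $M$. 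At the centre the Dirichlet series gives $\mathrm{Re}\,f(s_0) \ge -\log\zeta(\sigma_0) \gg -\log\log kT$ and $|f(s_0)| \ll \log\log kT$. Lemma~\ref{Bor} then yields
\[
|f(s) - f(s_0)| \le 2\bigl(M - \mathrm{Re}\,f(s_0)\bigr)\frac{r}{R - r} \ll \log\log kT,
\]
since $r/(R - r) = O(1)$, whence $|\log L(s,\chi)| = |f(s)| \le |f(s_0)| + |f(s) - f(s_0)| \ll \log\log kT$.

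The main obstacle --- and the reason the final bound is $\log\log kT$ rather than $O(1)$ --- is geometric: because the disk of radius $R$ must stay within the zero-free strip, whose width is only $\asymp 1/\log kT$, its centre cannot be moved more than $O(1/\log kT)$ to the right of $\sigma = 1$, and at such a centre the Dirichlet series already forces $|\log L|$ to be of size $\log\zeta(1 + c/\log kT) \asymp \log\log kT$. Care is thus needed at two points: checking that the chosen radii keep the $R$-disk inside the zero-free region (so that the branch of $\log L$ remains analytic and single-valued) while it still encloses the target boundary point, and verifying that Lemma~\ref{L=O} may legitimately be invoked on the marginally wider strip swept out by that disk.
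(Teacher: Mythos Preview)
Your proposal is correct and follows essentially the same route as the paper: a direct Dirichlet-series bound for $\sigma \ge 1 + c/\log kT$, and for the remaining boundary points an application of the Borel--Carath\'eodory lemma with centre $s_0 = 1 + c/\log kT + it$, radii $r = 2c/\log kT$ and $R = 3c/\log kT$, and majorant $M$ supplied by Lemma~\ref{L=O}. Your write-up is in fact a bit more explicit than the paper's about fixing the branch of $\log L$ and about why the chosen disk stays inside the zero-free region.
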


\begin{proof}
If $\sigma \ge 1 + c/\log k T$, then
\[
|\log L(s, \chi)| = \left| \sum_{p} \sum_{m=1}^\infty  \frac{\chi^m(p)}{m p^{m s}} \right| \le \log \zeta(\sigma) \ll \log \frac{1}{\sigma - 1} \ll \log \log k T.
\]

Using Lemma \ref{L=O}, we obtain
\[
\operatorname{Re} \log L(s, \chi) = \log |L(s, \chi)| \le C' \log \log k T.
\]
for $|\sigma - 1| \le 3c/\log{k T}$, $|t| \le 2 T$.

Thus, setting $s_0 = 1 + c/\log k T + i t$, $R = 3 c / \log k T$, $r = 2c / \log k T$ in Lemma \ref{Bor}, we find that
\[
|\log L(s, \chi) - \log L(s_0, \chi)| \le 2 (C' \log \log k T - O(\log \log k T)) \ll \log \log k T.
\]
for $|s - s_0| = r$
\end{proof}

\begin{lemma}
\label{L_main}

Let $\chi$ be a non-principal character modulo $k$ and $\log k = o(\sqrt{\log x})$. Assume that $L(s, \chi)$  has no zeroes in the domain $\sigma \ge 1 - \frac{3c}{\log k T}, \, \, |t| \le 2 T$ for $T > C$. 
Let $\alpha \in \mathbb{C}$, $|\alpha| \le 1$. 
Let $f(n)$ be a complex valued arithmetic function, such that $|f(n)| \le 1$ and  let
\[
\sum_{n = 1}^\infty \frac{f(n)}{n^s} = (L(s, \chi))^\alpha H(s),
\]
for $\operatorname{Re} s > 1$. Let $H(s), s = \sigma + it$ be analytic in the domain $\sigma > 1 - \frac{3c}{\log k T} , |t| \le T$ for each $T > C$. Moreover, let $H(s) \ll \log^{c_5} T $ in this domain.

Then
\[
\sum_{n \le x} f(n) \ll x \exp\left( -c_6 \sqrt{\log x} \right).
\]
Here the constant in Vinogradov symbol depends only on $c, c_5, C$ and does not depend on $\chi$.
\end{lemma}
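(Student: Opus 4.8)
The plan is to recover $\sum_{n \le x} f(n)$ from its Dirichlet series $F(s) = (L(s,\chi))^\alpha H(s)$ by Perron's formula and then to shift the contour leftward into the zero-free region. The key structural point is that, unlike $\zeta(s)^\alpha$, a non-principal $L$-function is analytic and nonvanishing at $s=1$, so the shifted integrand has no pole there and there is no main term to extract — every contribution is an error term.

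First I would apply Lemma \ref{Perron}. Since $|f(n)| \le 1$, the series $F(s)$ converges absolutely for $\operatorname{Re} s > 1$ and $\sum_n |f(n)| n^{-\sigma} \le \zeta(\sigma) \ll (\sigma - 1)^{-1}$, so the hypotheses hold with $a = 1$ and $\alpha = 1$. Taking $b = 1 + 1/\log x$ and leaving $T$ as a free parameter, and using $x^b \ll x$ together with $\max_{x/2 \le n \le 3x/2}|f(n)| \le 1$, this gives
\[
\sum_{n \le x} f(n) = \frac{1}{2\pi i}\int_{b - iT}^{b + iT} F(s)\frac{x^s}{s}\,ds + O\!\left(\frac{x\log x}{T} + \log T\right).
\]

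The next step is to bound $F$ in the zero-free region. Because $\chi$ is non-principal, $L(s,\chi)$ is analytic at $s=1$ and, by hypothesis, nonzero in $\sigma \ge 1 - 3c/\log kT$, $|t| \le 2T$; hence a single-valued branch of $\log L(s,\chi)$ exists there and $(L(s,\chi))^\alpha = \exp(\alpha\log L(s,\chi))$ is analytic. Lemma \ref{log L} gives $\log L(s,\chi) \ll \log\log kT$ on $\sigma \ge 1 - c/\log kT$, $|t| \le T$, so that $|(L(s,\chi))^\alpha| = \exp(\operatorname{Re}(\alpha\log L(s,\chi))) \le \exp(|\log L(s,\chi)|) \ll (\log kT)^{O(1)}$ since $|\alpha| \le 1$; combined with $H(s) \ll \log^{c_5}T$ this yields $|F(s)| \ll (\log kT)^{O(1)}$ throughout that region. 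In particular $F$ is analytic there, so by Cauchy's theorem I may move the segment $\operatorname{Re} s = b$ to $\operatorname{Re} s = \sigma_1 := 1 - c/\log kT$ without crossing a pole. On the left vertical line $x^{\sigma_1} = x\exp(-c\log x/\log kT)$ and $\int_{-T}^{T}\frac{dt}{|\sigma_1 + it|} \ll \log T$, so that segment contributes $\ll x\exp(-c\log x/\log kT)(\log kT)^{O(1)}$; the two horizontal segments at height $T$ contribute $\ll x(\log kT)^{O(1)}/(T\log x)$, which is dominated by the Perron remainder.

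Finally I would balance the zero-free-region saving against the remainder by choosing $\log T = \sqrt{\log x}$, so that $\log kT = \log k + \sqrt{\log x}$ and the exponential factor becomes exactly $\exp(-c\log x/(\sqrt{\log x} + \log k))$, the shape claimed. The remainder $x\log x\, e^{-\sqrt{\log x}}$ is then comfortably below the target. The one remaining point, which I expect to be the main obstacle, is absorbing the polynomial factor $(\log kT)^{O(1)}$ into the exponential by passing from $c$ to a smaller constant $c_6$: this requires $(\log kT)^{O(1)} \le \exp((c - c_6)\log x/(\sqrt{\log x} + \log k))$. Here the hypothesis $\log k = o(\log x)$ is what makes the right-hand exponent $\log x/(\sqrt{\log x}+\log k)$ tend to infinity while the left-hand factor only grows like $\exp(O(\log\log kT))$, so the inequality holds for all large $x$ and the estimate closes; tracking this trade-off carefully is the delicate part of the argument.
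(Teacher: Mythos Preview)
Your proposal is correct and follows essentially the same route as the paper: Perron's formula with $b=1+1/\log x$, contour shift to $\sigma_1=1-c/\log kT$ using the analyticity of $(L(s,\chi))^\alpha H(s)$ there, the bound $|(L(s,\chi))^\alpha|\ll(\log kT)^{O(1)}$ via Lemma~\ref{log L}, and the choice $\log T=\sqrt{\log x}$. The paper fixes $T$ at the outset and is terser about absorbing the $(\log kT)^{O(1)}$ factor, but the argument is the same.
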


\begin{proof}

Substituting  $b = 1 + 1/\log{x}$, $\log T = \sqrt{\log{x}}$ into Perron's formula (Lemma \ref{Perron}), we obtain

\[
\sum_{n \le x} f(n) = \frac{1}{2 \pi i} \int_{b - i T}^{b + i T} (L(s, \chi))^\alpha H(s) \frac{x^s}{s} ds + R(x),
\]
where
\[
R(x) \ll x \exp(-c_7 \sqrt{\log x}).
\]

Let $\beta := 1 - c/\log k T$ and consider the contour $\Gamma$ consisting of segments joining in succession the points  $\beta + iT, \beta - iT, b - iT, b + i T, \beta + iT$.

Note that $(L(s, \chi))^\alpha H(s) \frac{x^s}{s}$ is well-defined and holomorphic in the interior and on the sides of $\Gamma$.

On the sides of $\Gamma$ we have
\[
|(L(s, \chi))^\alpha| = |e^{\alpha \log L(s, \chi)}| \le e^{|\log L(s, \chi)|} \ll \log^{c_8} k T,
\]
here we used Lemma \ref{log L}.

By residue theorem, we have
\[
\int_{\Gamma} (L(s, \chi))^\alpha H(s) \frac{x^s}{s} ds = 0.
\]

Estimating the integral trivially on the sides of $\Gamma$, we obtain
\[
\int_{b - i T}^{b + i T} (L(s, \chi))^\alpha H(s) \frac{x^s}{s} ds \ll x \exp\left( -c_6 \sqrt{\log x} \right).
\]
This concludes the proof. 

\end{proof}

Let us set $e(z) := \exp(2 \pi i z)$. By $\mathbb{P}$ we denote the set of prime numbers.

\begin{lemma}
\label{F_{t, p, u}}
Let $u(n)$ be a completely multiplicative function such that $|u(n)| \le 1$ for all $n$. 
Set
\[
L(s, u) := \sum_{n \ge 1} \frac{u(n)}{n^{s}}.
\]

Then for $\operatorname{Re} s > 1$ we have
\[
\sum_{n \ge 1} \frac{e \left( \frac{w(n) t}{p} \right) u(n)}{n^s} = (L(s, u))^{e(t/p)} F_{t, p, u}(s).
\]
Here $F_{t, p, u}(s)$ is holomorphic on $\operatorname{Re} (s) > 1/2$. Moreover, for every $\epsilon > 0$, there are $0 < b_{1, \epsilon} < b_{2, \epsilon}$, such that in the half-plane $\operatorname{Re} (s) > 1/2 + \epsilon$ we have
\[
b_{1, \epsilon} < |F_{t, p, u}(s)| < b_{2, \epsilon}.
\]
And  $b_{1, \epsilon}$ and $b_{2, \epsilon}$ do not depend on $p, t$, and $u$.

In particular there exist constants $0 < b_1 < b_2$ such that
\[
b_1 \le |F_{t, p, u}(1)| \le b_2
\]
for every $p$, $t$ and $u$.

\end{lemma}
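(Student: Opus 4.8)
The plan is to expose the multiplicativity hidden in the Dirichlet series and to isolate the single linear term that the factor $(L(s,u))^{e(t/p)}$ is engineered to cancel. Write $\alpha := e(t/p)$, so that $|\alpha| = 1$, and set $g(n) := e(w(n)t/p)\,u(n)$. Since $w$ is additive and $u$ is completely multiplicative, $g$ is multiplicative, so for $\operatorname{Re} s > 1$ the series factors as an Euler product $\sum_n g(n) n^{-s} = \prod_q g_q(s)$, where $q$ runs over primes and $g_q(s) = \sum_{k \ge 0} e(w(q^k)t/p)\,u(q)^k q^{-ks}$. Because $u$ is completely multiplicative, $L(s,u) = \prod_q (1 - u(q)q^{-s})^{-1}$, hence $(L(s,u))^\alpha = \prod_q (1 - u(q)q^{-s})^{-\alpha}$ for $\operatorname{Re} s > 1$. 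This forces $F_{t,p,u}(s) = \prod_q F_q(s)$ with local factors $F_q(s) := g_q(s)\,(1 - u(q)q^{-s})^{\alpha}$, and I would take this product as the definition of $F_{t,p,u}$ and show it extends holomorphically to $\operatorname{Re} s > 1/2$.

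The heart of the argument is a first-order expansion of each $F_q$. Since $w(q) = 1$, the coefficient of $q^{-s}$ in $g_q(s)$ is exactly $e(t/p)u(q) = \alpha u(q)$, while the binomial series gives $(1 - u(q)q^{-s})^\alpha = 1 - \alpha u(q) q^{-s} + O(q^{-2s})$. The two linear terms cancel, leaving $F_q(s) = 1 + O(q^{-2\sigma})$ with $\sigma = \operatorname{Re} s$, and the implied constant absolute and uniform in $p,t,u$: the tail $\sum_{k\ge2} e(w(q^k)t/p) u(q)^k q^{-ks}$ is bounded by $\sum_{k \ge 2} q^{-k\sigma}$ using $|e(\cdot)|=1$ and $|u|\le1$, while $|\binom{\alpha}{k}| \le 1$ follows from $|\alpha|\le1$. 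Consequently, on any half-plane $\sigma \ge 1/2 + \epsilon$ one has $\sum_q |F_q(s) - 1| \ll_\epsilon \sum_q q^{-1-2\epsilon} < \infty$, uniformly in the parameters. This yields at once the locally uniform convergence of the product, hence holomorphy of $F_{t,p,u}$ on $\operatorname{Re} s > 1/2$ (as $\epsilon$ is arbitrary), together with the upper bound $|F_{t,p,u}(s)| \le \prod_q (1 + |F_q - 1|) \le \exp\!\big(\sum_q |F_q - 1|\big) =: b_{2,\epsilon}$.

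For the lower bound I would again split the product. For the cofinite set of primes with $|F_q - 1| \le 1/2$, the inequality $|F_q| \ge 1 - |F_q - 1| \ge \exp(-2|F_q-1|)$ bounds that part below by $\exp\!\big(-2\sum_q |F_q - 1|\big) > 0$, uniformly. It remains to bound the finitely many remaining small-prime factors away from zero. The factor $(1 - u(q)q^{-s})^\alpha$ is harmless: for $\sigma \ge 1/2 + \epsilon$ its base satisfies $0 < 1 - q^{-\sigma} \le |1 - u(q)q^{-s}| \le 1 + q^{-\sigma}$, and raising to a power of modulus at most $1$ keeps the modulus in a fixed compact subset of $(0,\infty)$. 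So everything reduces to a uniform lower bound for $|g_q(s)|$ at the finitely many small primes.

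I expect this last point to be \emph{the main obstacle}, and the place where the precise nature of $w$ enters. The crude bound $|g_q(s)| \ge 1 - |u(q)q^{-s}|/(1 - |u(q)q^{-s}|)$ is positive only when $|u(q)q^{-s}| < 1/2$, which for $\sigma > 1/2 + \epsilon$ already settles every prime $q \ge 5$ but leaves $q \in \{2,3\}$ delicate. Here I would use the explicit shape of the local factor: when $w$ is completely additive (e.g. $w = \Omega$) one has the closed form $g_q(s) = (1 - \alpha u(q) q^{-s})^{-1}$, manifestly nonzero with $\tfrac{1}{1+q^{-\sigma}} \le |g_q(s)| \le \tfrac{1}{1 - q^{-\sigma}}$, so the two-sided bound is immediate and uniform, producing $b_{1,\epsilon}$. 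In the general additive case one must instead rule out, uniformly over the admissible parameters $u(q)$, $t$, $p$ and the values $w(q^k)$, that the finitely many functions $g_q(s)$ vanish in $\operatorname{Re} s > 1/2 + \epsilon$; this is the genuinely delicate step and is exactly what pins down $b_{1,\epsilon}$. Finally, the assertion $b_1 \le |F_{t,p,u}(1)| \le b_2$ is just the case $\sigma = 1$, which lies in every half-plane $\sigma \ge 1/2 + \epsilon$ with $\epsilon < 1/2$.
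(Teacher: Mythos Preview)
Your approach coincides with the paper's: Euler-product factorisation, cancellation of the $q^{-s}$ term via $w(q)=1$, and uniform control of the resulting $O(q^{-2\sigma})$ remainders. The one packaging difference is that the paper works with $\log F_{t,p,u}$ rather than with the product: it expands each local contribution $\alpha\log(1-u(q)q^{-s})+\log g_q(s)$ as a power series in $q^{-s}$, notes that the linear terms cancel, obtains $|\log F_{t,p,u}(s)|\le\sum_q O(q^{-2\sigma})\le C_\epsilon$, and reads off $e^{-C_\epsilon}\le|F_{t,p,u}(s)|\le e^{C_\epsilon}$ in one stroke, with no small-prime case split.

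Your worry about $g_q$ vanishing at $q\in\{2,3\}$ is not misplaced, however. The paper's use of the Taylor series for $\log g_q(s)$ tacitly assumes $|g_q(s)-1|<1$, which is the same obstruction in another guise, and in fact the uniform lower bound is false in the stated generality: with $w=\omega$, $u\equiv1$, $p=2$, $t=1$ one has $g_2(1)=1-\sum_{k\ge1}2^{-k}=0$ and hence $F_{1,2,1}(1)=0$. What both arguments establish cleanly is the upper bound throughout $\operatorname{Re} s>1/2$, together with the full two-sided bound once $|g_q(s)-1|<1$ for every prime $q$; the latter holds, for instance, whenever $\alpha=e(t/p)$ stays in a fixed neighbourhood of $1$, which is the only regime in which the paper ever invokes the lower bound (namely $t=1$ with $p$ large). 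So your instinct is correct and the paper simply does not flag the issue.
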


\begin{proof}
Set
\[
 F_{t, p, u}(s) = (L(s, u))^{-e(t/p)} \sum_{n \ge 1} \frac{e \left( \frac{w(n) t}{p} \right) u(n)}{n^s} .
\]
By assumption, $w(n)$ is an additive function and $w(q) = 1$ for $q \in \mathbb{P}$.
Thus for $\operatorname{Re} s > 1$, we have
\[
\sum_{n \ge 1} \frac{e \left( \frac{w(n) t}{p} \right) u(n)}{n^s} = \prod_{q \in \mathbb{P}} 
\left( 1 + \frac{e(t/p) u(q)}{q^s} + \frac{e\left( \frac{w(q^2) t}{p} \right) u(q)^2}{q^{2s}} + \ldots \right)
\]
and
\[
L(s, u) = \prod_{q \in \mathbb{P}} (1 - u(q) q^{-s})^{-1}.
\]

Therefore,
\[
\log F_{t, p, u}(s) = \sum_{q \in \mathbb{P}} e(t/p) \log(1 - u(q) q^{-s}) + \log \left( 1 + \frac{e(t/p) u(q)}{q^s} + \frac{e\left( \frac{w(q^2) t}{p} \right) u(q)^2}{q^{2s}} + \ldots \right).
\]

Let us denote
\[
h_{q, t, p, u}(s) := \sum_{j \ge 2} \frac{e\left( \frac{w(q^j) t}{p} \right) u(q)^j}{q^{j s}}.
\]

This series converges uniformly on  $\operatorname{Re}(s) > 1/2 + \epsilon$. So $h_{q, t, p, u}(s)$ is analytic in the half-plane $\operatorname{Re}(s) > 1/2$. In addition, $h_{q, t, p, u}(s) = O(q^{-2 \sigma})$.

For $\operatorname{Re} s > 1$ we have
\[
\log F_{t, p, u}(s) = \sum_{q \in \mathbb{P}} e(t/p)(-u(q) q^{-s} - u(q)^2 q^{-2s}/2 - \ldots)  + \]
\[ 
(e(t/p) u(q) q^{-s} + h_{q, t, p, u}(s)) + 
\sum_{j \ge 2} \frac{(-1)^{j + 1}}{j} (e(t/p) u(q) q^{-s} + h_{q, t, p, u}(s))^{j}.
\]

If $\operatorname{Re} s \ge 1/2 + \epsilon$, we obtain
\[
\log F_{t, p, u}(s) \ll \sum_{q \in \mathbb{P}} q^{-2 \sigma} \ll_{\epsilon} 1.
\]
So $|\log F_{t, p, u}(s)| \le C_\epsilon$. Hence

\[
e^{-C_\epsilon} \le F_{t, p, u}(s) \le e^{C_\epsilon}.
\]

\end{proof}

\section{Main results}

Fix a large $N$. Let $X = X(N)$ be a number that we will choose later. Assume that $\log X(N) = o(\sqrt{\log N})$. Let $p$ be an unexceptional prime lying in the interval $[X - X^{4/5}, X]$. By Lemmas  \ref{Siegel} and \ref{short} such $p$ exists.

\begin{theorem} Under assumptions above,
\[
a(r) = \frac{N}{p} + 2\frac{N}{p} \operatorname{Re} \left(e \left(\frac{-r}{p}  A_1 \,  \right) \right) + B_1(r) + B_2(r) + B_3(r).
\]
Here
\[
A_t = \frac{F_{t, p}(1) \, (1 - \frac{p}{p-1}d_{t, p}(1))}{\Gamma(e(t/p))} \log^{e(t/p) - 1} N,
\]
\[
d_{t, p}(s) :=  \left(1 + \frac{e \left( \frac{t}{p} \right)}{p^s} + \frac{e \left( \frac{w(p^2) t}{p} \right)}{p^{2s}} + \ldots\right)^{-1},
\]
\[
F_{t, p}(s) = \prod_{q \in \mathbb{P}} \left(1 + \frac{e \left( \frac{t}{p} \right)}{q^s} + \frac{e \left( \frac{w(q^2) t}{p} \right)}{q^{2s}} + \ldots\right) \left(1 - \frac{1}{q^s} \right)^{e(t/p)},
\]
\[
B_1(r) \ll B_1 := p N \exp\left( -c_6 \sqrt{\log N} \right),
\]
\[
B_2(r) \ll B_2 := N \log^{\cos(2 \pi / p) - 2} N 
\]
and
\[
B_3(r) = 2 \frac{N}{p} \sum_{t=2}^{(p-1)/2} \operatorname{Re} \left( e \left(\frac{-r}{p} \right) A_{t} \right) \ll B_3 := \frac{N}{p^3} \sum_{t=2}^{(p-1)/2} t \log^{\operatorname{cos}(2\pi t / p) - 1} N.
\]
Moreover
\[
\left|A_1 \right| \asymp p^{-2} \log^{\cos(2\pi/p) - 1} N.
\]
\end{theorem}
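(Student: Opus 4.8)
The plan is to start from the orthogonality relation $\mathds{1}[n + w(n) \equiv r \pmod p] = \frac{1}{p}\sum_{t=0}^{p-1} e\!\left(\frac{t(n+w(n)-r)}{p}\right)$, which after summing over $n \le N$ gives
\begin{equation*}
a(r) = \frac{1}{p}\sum_{t=0}^{p-1} e\!\left(\frac{-rt}{p}\right) \sum_{n \le N} e\!\left(\frac{t(n+w(n))}{p}\right).
\end{equation*}
The $t=0$ term contributes exactly $\lfloor N \rfloor / p = N/p + O(1)$, which will be the main term. For $t \ne 0$ I would split the inner sum by pulling out the factor $e(tn/p)$ via a further decomposition into residue classes of $n$ modulo $p$, or more directly estimate $S_t(N) := \sum_{n\le N} e(t(n+w(n))/p)$ using the multiplicative structure. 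Pairing the terms $t$ and $p-t$ (since $a(r)$ is real and $e(-r(p-t)/p) = \overline{e(-rt/p)}$ while the corresponding character sums are complex conjugates) produces the real parts $2\operatorname{Re}(\cdots)$ appearing in the statement, with $t=1$ isolated as the displayed secondary term and $t \ge 2$ collected into $B_3(r)$.

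The core step is to analyze $S_t(N)$ via the Dirichlet series $\sum_{n\ge 1} e(w(n)t/p)\, n^{-s}$. Here I would introduce the twist $u(n)$ to track the residue $n \bmod p$: writing $n$ in a fixed arithmetic progression modulo $p$ lets me replace $e(tn/p)$ by a constant and reduces the problem to summing $e(w(n)t/p)$ over $n$ in that progression. Applying Lemma \ref{F_{t, p, u}}, the generating series factors as $(L(s,u))^{e(t/p)} F_{t,p,u}(s)$ with $F_{t,p,u}$ bounded and analytic past $\operatorname{Re} s = 1/2$; the factor $d_{t,p}(s)$ removes the Euler factor at the prime $p$ itself, which accounts for the $(1 - \frac{p}{p-1} d_{t,p}(1))$ correction in $A_t$. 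Since $p$ is unexceptional, $L(s,\chi)$ for the relevant characters modulo $p$ is zero-free in the Siegel region (Lemmas \ref{Siegel0}, \ref{Siegel}), so Lemma \ref{L_main} applies: its error $x\exp(-c_6 \log x/(\sqrt{\log x}+\log p))$ is precisely $B_1$ after multiplying by $p$ from the outer sum over residue classes. The principal character contribution is handled instead by Lemma \ref{Chang} with $\alpha = e(t/p)$, whose main term $\frac{H(1)}{\Gamma(\alpha)} x \log^{\alpha-1} x$ gives exactly $A_t$ and whose $O(x\log^{\operatorname{Re}\alpha - 2}x)$ error, with $\operatorname{Re} e(2\pi/p) = \cos(2\pi/p)$, becomes $B_2$ for $t=1$.

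For the bound on $B_3$ I would use $|A_t| \ll \frac{1}{|\Gamma(e(t/p))|}\log^{\cos(2\pi t/p)-1} N$ together with the behavior of $1/\Gamma(e(t/p))$ as $t/p$ ranges over the unit circle: near $t=0$ the argument $e(t/p)$ approaches $1$, where $1/\Gamma$ vanishes linearly, giving the factor $t/p^2$ that combines with the overall $N/p$ to produce the stated $\frac{N}{p^3}\sum_t t \log^{\cos(2\pi t/p)-1}N$. This requires a uniform estimate $|1/\Gamma(e(t/p))| \ll t/p$ for small $t/p$, obtainable from the Taylor expansion of $1/\Gamma$ at $1$ since $1/\Gamma(1)=1$ is wrong—rather $\Gamma$ has no zero but $1/\Gamma(z)$ near $z=1$ is of size $|z-1|\cdot|\text{const}|$ is not correct either, so the honest computation is that $|e(t/p)-1| \asymp t/p$ and $1/\Gamma$ is analytic and bounded there, which I expect to be the delicate bookkeeping point. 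Finally, the concluding asymptotic $|A_1| \asymp p^{-2}\log^{\cos(2\pi/p)-1}N$ follows from the $t=1$ case of this same analysis: $|F_{1,p}(1)| \asymp 1$ by Lemma \ref{F_{t, p, u}}, the correction factor $(1 - \frac{p}{p-1}d_{1,p}(1))$ is of order $p^{-1}$ by expanding $d_{1,p}(1) = 1 - e(1/p)/p + O(p^{-2})$, and $|1/\Gamma(e(1/p))| \asymp 1/p$ since $e(1/p) - 1 = 2\pi i/p + O(p^{-2})$; multiplying these gives the $p^{-2}$ factor. The main obstacle throughout is maintaining uniformity in $p$ and $t$ in every error term, since $p$ grows with $N$ and the constants in Lemmas \ref{Chang} and \ref{L_main} must not secretly depend on the character or on $p$.
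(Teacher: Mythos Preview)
Your overall strategy matches the paper's: Fourier expand in $t$, split $n$ by residue classes modulo $p$, detect those classes with Dirichlet characters, then apply Lemma~\ref{Chang} to the principal-character contribution and Lemma~\ref{L_main} to the non-principal ones. That skeleton is correct and is exactly what the paper does.

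The genuine gap is in your accounting for the $p^{-2}$ in $|A_1|$ and the $t/p^2$ in the bound for $B_3$. You attribute the smallness to $|1/\Gamma(e(1/p))|\asymp 1/p$, but this is false: $\Gamma(1)=1$, so $1/\Gamma(e(1/p))\to 1$ and is $\asymp 1$, not $\asymp 1/p$. (You sense this yourself in the $B_3$ discussion, where you eventually concede ``$1/\Gamma$ is analytic and bounded there,'' but then revert to the wrong claim in the last paragraph.) You also assert that $1-\tfrac{p}{p-1}d_{1,p}(1)$ is of order $p^{-1}$; it is of order $p^{-2}$. Expanding $d_{t,p}(1)=1-e(t/p)/p+O(p^{-2})$ and $\tfrac{p}{p-1}=1+\tfrac{1}{p-1}$ gives
\[
1-\tfrac{p}{p-1}d_{t,p}(1)=\frac{e(t/p)-1}{p-1}+O(p^{-2})=\frac{2\pi i\,t}{p^{2}}+O\!\left(\frac{t^{2}}{p^{3}}\right)+O(p^{-2}),
\]
which is $\ll t/p^{2}$ in general and $\asymp p^{-2}$ when $t=1$. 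The entire $p^{-2}$ (respectively $t/p^{2}$) comes from this factor; the $\Gamma$-term contributes only a bounded constant. Your two errors happen to cancel in $|A_1|$, but the $B_3$ bound does not survive your route: with $1/\Gamma(e(t/p))$ merely bounded and no decay extracted from the $d_{t,p}$-factor, you only get $|A_t|\ll \log^{\cos(2\pi t/p)-1}N$, hence $B_3(r)\ll \tfrac{N}{p}\sum_{t=2}^{(p-1)/2}\log^{\cos(2\pi t/p)-1}N$, which is weaker than the stated bound by a factor of $p^{2}/t$ and would not suffice for the application in Theorem~\ref{main}.
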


\begin{proof}

Applying discrete Fourier transform, we obtain
\[
a(r) = \sum_{\substack{n \le N \\ n + w(n) \equiv r \pmod p}} 1 = \frac{1}{p} \sum_{\substack{n \le N \\ t \in \mathbb{Z} / p \mathbb{Z}}} 
e \left( \frac{(n + w(n)) t}{p}\right) e \left( \frac{-r t}{p} \right).
\]
Let $k, t \in \mathbb{Z}/p\mathbb{Z}$. Set
\[
S_t := \sum_{n \le N} e \left( \frac{w(n) t}{p} \right),
\]
\[
S_{k, t} := \sum_{\substack{n \le N \\ n \equiv k \pmod p}} e \left( \frac{w(n) t}{p} \right),
\]
\[
S_{t, \chi} := \sum_{n \le N} e \left( \frac{w(n) t}{p}\right) \chi(n) .
\]

Then
\[
a(r) = \frac{1}{p} \sum_{t, k \in \mathbb{Z}/p \mathbb{Z} } e \left( \frac{(k - r) t}{p} \right) S_{k, t}.
\]

Note that
\[
S_{k, 0} = \frac{N}{p} + O(1),
\]
\[
S_{0, t} = S_t - S_{t, \chi_0},
\]
\[
S_{k, t} = \frac{1}{p-1} \sum_{\chi} \overline{\chi(k)} \, S_{t, \chi}, \, \text{if} \, \, \, k, t \ne 0 .
\]

Putting $u \equiv 1$ in Lemma \ref{F_{t, p, u}}, we obtain
\[
\sum_{n \ge 1} \frac{e \left( \frac{w(n) t}{p} \right)}{n^s} = (\zeta(s))^{e(t/p)} F_{t, p}(s).
\]
Here $F_{t, p}(s) := F_{t, p, u}(s)$. 

Now applying Lemma \ref{Chang}, we see that
\[
S_t = \frac{F_{t, p}(1)}{\Gamma(e(t/p))} N \log^{e(t/p)-1} N + O(N \log^{\operatorname{cos} (2 \pi t / p) - 2}N).
\]

Lemma \ref{F_{t, p, u}} gives us
\[
\sum_{n \ge 1} \frac{e \left( \frac{w(n) t}{p} \right) \chi(n)}{n^s} = (L(s, \chi))^{e(t/p)} F_{t, p, \chi}(s).
\]

If $\chi$ is non-principal, then using Lemma \ref{L_main} we see that
\[
S_{t, \chi} = O\left(N \exp\left( -c_6 \sqrt{\log N} \right)\right).
\]

Note that $c_6$ and constant in $O$ do not depend on $\chi$.

Therefore, if $k, t \ne 0$, then we have
\[
S_{k, t} = \frac{S_{t, \chi_0}}{p-1} + O\left(N \exp\left( -c_6 \sqrt{\log N} \right)\right),
\]

there $\chi_0$ is the principal character modulo $p$. Let us denote
\[
d_{t, p}(s) :=  \left(1 + \frac{e \left( \frac{t}{p} \right)}{p^s} + \frac{e \left( \frac{w(p^2) t}{p} \right)}{p^{2s}} + \ldots\right)^{-1}.
\]
 We have

\[
\sum_{n \ge 1} \frac{e\left(\frac{w(n) t}{p}\right) \chi_0(n)}{n^s} = 
d_{t, p}(s) \sum_{n \ge 1} \frac{e \left( \frac{w(n) t}{p} \right)}{n^s} =
(\zeta(s))^{e(t/p)} F_{t, p}(s) d_{t, p}(s) .
\]

Using Lemma \ref{Chang}, we obtain
\[
S_{t, \chi_0} = \frac{F_{t, p}(1) d_{t, p}(1)}{\Gamma(e(t/p))} N \log^{e(t/p)-1} N + O(N \log^{\operatorname{cos} (2 \pi t / p) - 2}N),
\]

Putting all together we obtain
\[
a(r) = \frac{N}{p} + \frac{1}{p} \sum_{t \not\equiv 0 \pmod p} e (-rt/p) S_{0, t}  + 
\frac{1}{p (p-1)} \sum_{t \not\equiv 0 \pmod p} S_{t, \chi_0} \sum_{k \not\equiv 0 \pmod p} e \left( \frac{(k - r) t}{p} \right) + B_1(r).
\]
Here
\[
B_1(r) \ll p N \exp\left( -c_6 \sqrt{\log N} \right).
\]

Thus
\[
a(r) = \frac{N}{p} + \frac{1}{p} \sum_{t \not\equiv 0 \pmod p} e(-rt/p) (S_t - S_{t, \chi_0} - S_{t, \chi_0}/(p-1)) + B_1(r) = 
\]
\[
\frac{N}{p} + \frac{1}{p} \sum_{t \not\equiv 0 \pmod p}  \left(S_t - \frac{p}{p-1} S_{t, \chi_0} \right) e(-rt/p) + B_1(r).
\]

Now we have
\[
a(r) = \frac{N}{p} + \frac{N}{p} \sum_{t \not\equiv 0 \pmod p} \frac{e(-rt/p) F_{t, p}(1) \, (1 - \frac{p}{p-1}d_{t, p}(1))}{\Gamma(e(t/p))} \log^{e(t/p) - 1} N + B_1(r) + B_2(r).
\]
Here
\[
B_2(r) = O\left( N \log^{\cos(2 \pi / p) - 2} N \right).
\]

Let us denote
\[
A_t := \frac{F_{t, p}(1) \, (1 - \frac{p}{p-1}d_{t, p}(1))}{\Gamma(e(t/p))} \log^{e(t/p) - 1} N,
\]
\[
B_3(r) = \frac{N}{p} \sum_{t=2}^{p-2} e(-rt/p) A_t.
\]

Then
\[
a(r) = \frac{N}{p} + 2\frac{N}{p} \operatorname{Re} \left( e \left(\frac{-r}{p} \right) A_{1} \right) + B_1(r) + B_2(r) + B_3(r).
\]

We have
\[
B_3(r) = 2 \frac{N}{p} \sum_{t = 2}^{(p-1)/2} \operatorname{Re} \left( e \left(\frac{-r t}{p} \right) A_{t} \right).
\]

Note that
\[
\left|1 - \frac{p}{p-1}d_{t,p}(1) \right| \asymp \left|(1 - p^{-1}) - \left( 1 - \frac{e(t/p)}{p} + \frac{e(t/p)^2 - e(w(p^2) t / p)}{p^2} + O(p^{-3}) \right) \right| =
\]
\[
\left|(1 - p^{-1}) - \left(1 - \left(1 + \frac{2 \pi i t}{p} + O(t^2 / p^2)\right) p^{-1} + \theta p^{-2}\right)\right| = 2 \pi i \frac{t}{p^2} + 2 \theta p^{-2} + O\left( \frac{t^2}{p^3} \right) \ll \frac{|t|}{p^2},
\]
here $|\theta| \le 1$. If $t = 1$, we obtain
\[
\left|1 - \frac{p}{p-1}d_{t,p}(1) \right|  \gg p^{-2}.
\]

Since $\Gamma(s)^{-1}$ is an entire function and $|F_{t, p}(1)| \le b_2$, it follows that
\[
\left| A_{t} \right| \ll \frac{t}{p^2} \log^{\cos(2\pi t/ p) - 1} N.
\]

Hence
\[
B_3 \ll \frac{N}{p^3} \sum_{t=2}^{(p-1)/2} t \log^{\operatorname{cos}(2\pi t / p) - 1} N.
\]

Since
$|e(1/p) - 1| \ll p^{-1}$ and $\Gamma(1) = 1$, it follows that 
\[
\left|\Gamma(e(1/p))^{-1} \right| > C_{\Gamma}
\]
for some constant $C_{\Gamma} > 0$ and $p$ large enough. Further we have $\left| F_{t, p} (1) \right| \ge b_1$. Hence
\[
\left|A_1 \right| \gg p^{-2} \exp((\cos(2\pi/p) - 1) \log \log N).
\]

This concludes the proof.

\end{proof}

\begin{proof}[Proof of Theorem \ref{main}]

Let $r$ belong to $R \subset \mathbb{Z}/p \mathbb{Z}$ iff
\[
2\frac{N}{p} \operatorname{Re} \left( e \left(\frac{-r}{p} \right) A_{1} \right) < 0.
\]
Note that 
\[
\left| \left\{ r : \operatorname{Re} \left( e \left(\frac{-r}{p} \right) A_{1} \right) \le -\frac{1}{2} \left| A_{1} \right|\right\} \right| \asymp p.
\]

Thus, substituting such $R$ into Lemma \ref{idea}, we obtain
\[
\Xi(N) \gg \sum_{r \in R} \frac{N}{p} - a(r) - 1 \gg -\sum_{r \in R} \left(  2\frac{N}{p} \operatorname{Re} \left( e \left(\frac{-r}{p} \right) A_{1} \right) + O(B_1 + B_2 + B_3 + 1) \right) \gg
\]
\[
N \left|A_{1}\right| + p O(B_1 + B_2 + B_3 + 1).
\]

Note that
\[
\frac{X^2}{p^2} \le 1 + 5 X^{-1/5}.
\]

Let us choose $X = \alpha^{-1} \sqrt{\log \log N}$, we obtain
\[
N \left|A_1\right| \gg N p^{-2} \exp \left( ((\cos(2\pi/p)) - 1) \log \log N \right) \gg N p^{-2} \exp\left( - \frac{2\pi^2}{p^2} \log \log N \right) \gg
\]
\[
 N X^{-2} \exp\left( - \frac{2\pi^2}{X^2}  (1 + 5 X^{-1/5}) \log \log N \right) \gg
\]
\[
 N X^{-2} \exp\left( - \frac{2\pi^2}{X^2} \log \log N \right) \gg \alpha^2 \exp \left(- 2\pi^2 \alpha^2 \right) \frac{N}{\log \log N}.
\]

Let us choose $\alpha$ such that $p B_3 \le 0.1 \, N \left|A_1\right|$. That is
\[
\tilde{C} \, \sum_{t=2}^{(p-1)/2} t \log^{\operatorname{cos}(2\pi t / p) - 1} N \le   \exp \left(- 2 \pi^2 \alpha^2 \right)
\]
for some constant $\tilde{C}$.

We have
\[
\sum_{t=2}^{(p-1)/2} t \log^{\operatorname{cos}(2\pi t / p) - 1} N  = 
\sum_{2 \le t < p/4} t \log^{\operatorname{cos}(2\pi t / p) - 1} N  + O(p^2 \log^{-1} N).
\]

Note that
\[
\frac{5 \pi^2}{X^2} \le \cos \frac{2\pi}{p} - \cos \frac{4\pi}{p} \le \cos \frac{4\pi}{p} - \cos \frac{6\pi}{p} \le \ldots 
\]

Therefore,
\[ 
\sum_{2 \le t < p/4} t \log^{\operatorname{cos}(2\pi t / p) - 1} N  \le 
\sum_{2 \le t < p/4} t \log^{\frac{-5 \pi^2 (t - 1)}{X^2}} N \le
\]
\[
2 \sum_{t = 1}^{\infty} t \exp \left(-5 \pi^2 \alpha^2 t \right) = \frac{2 \exp(-5 \pi^2 \alpha^2)}{\left( 1 - \exp(-5 \pi^2 \alpha^2) \right)^2}.
\]

Clearly, for $\alpha$ large enough, we have
\[
\frac{2 \exp(-5 \pi^2 \alpha^2)}{\left( 1 - \exp(-5 \pi^2 \alpha^2) \right)^2} \le 0.5 \, \tilde{C}^{-1} \exp \left(- 2\pi^2 \alpha^2\right).
\]
Obviously,
\[
p^2 \log^{-1} N = o(1).
\]
Thus we obtain 
\[
p B_3 \le 0.1 N \left| A_{1} \right|
\]
for $\alpha$ large enough. Note that $\alpha$ does not depend on $N$. Hence
\[
N \left|A_{1}\right| \gg \frac{N}{\log \log N}. 
\]

Obviously, $pB_1 = o \left( N / \log \log N \right)$ and $pB_2 = o \left( N / \log \log N \right)$.

Therefore, 
\[
\Xi(N) \gg N \left|A_{1}\right| \gg \frac{N}{\log \log N}.
\]

\end{proof}

I am grateful to Vitalii V. Iudelevich for setting the problem and Alexander B. Kalmynin for valuable discussions.



\end{document}